\definecolor{subsectioncolor}{rgb}{0.067,0.627,0.859}
\def\BibTeX{{\rm B\kern-.05em{\sc i\kern-.025em b}\kern-.08em
    T\kern-.1667em\lower.7ex\hbox{E}\kern-.125emX}}
\pgfplotsset{compat=newest}
\newlength\axisheight
\newlength\axiswidth
\newtheorem{theorem}{Theorem}
\newtheorem{assumption}{Assumption}
\newtheorem{remark}{Remark}
\newtheorem{lemma}{Lemma}
\newtheorem{proposition}{Proposition}
\newtheorem{example}{Example}
\DeclareMathOperator*{\argmin}{arg\,min}
\begin{document}
\title{Transient Performance of MPC for Tracking}
\author{Matthias K{\"o}hler, Lisa Kr{\"u}gel, Lars Gr{\"u}ne, Matthias A. M{\"u}ller, and Frank Allg{\"o}wer
\thanks{The authors are thankful that this work was funded by the Deutsche Forschungsgemeinschaft (DFG, German Research Foundation) -- AL 316/11-2 - 244600449; GR 1569/13-2 - 244602989; and under Germany's Excellence Strategy -- EXC 2075 -- 390740016.}
\thanks{Matthias K{\"o}hler and Frank Allg{\"o}wer are with the University of Stuttgart, Institute for Systems Theory and Automatic Control, Stuttgart, Germany
(e-mail: \{matthias.koehler, allgower\}@ist.uni-stuttgart.de)}
\thanks{Lisa Kr{\"u}gel and Lars Gr{\"u}ne are with the Universit{\"a}t Bayreuth, Mathematical Institute, Chair of Applied Mathematics, Bayreuth, Germany
(e-mail: \{lisa.kruegel, lars.gruene\}@uni-bayreuth.de)}
\thanks{Matthias A. M{\"u}ller is with the Leibniz University Hannover, Institute of Automatic Control, Hanover, Germany
(e-mail: mueller@irt.uni-hannover.de)}}


\pubid{\begin{minipage}{\textwidth}\ \\[14pt] \\ \\
\scriptsize
\copyright 2023 IEEE. Personal use of this material is permitted. Permission 
from IEEE must be obtained for all other uses, in any current or future 
media, including reprinting/republishing this material for advertising or 
promotional purposes, creating new collective works, for resale or 
redistribution to servers or lists, or reuse of any copyrighted 
component of this work in other works. See https://doi.org/10.1109/LCSYS.2023.3287798\\
M. K\"ohler, L. Kr\"ugel, L. Gr\"une, M. A. M\"uller, and F. Allg\"ower, "Transient Performance of MPC for Tracking," \emph{IEEE Control Syst. Lett.}, vol. 7, pp. 2545-2550, 2023.
\end{minipage}}

\maketitle

\begin{abstract}
    We analyse the closed-loop performance of a model predictive control (MPC) for tracking formulation with artificial references.
    It has been shown that such a scheme guarantees closed-loop stability and recursive feasibility for any externally supplied reference, even if it is unreachable or time-varying.
    The basic idea is to consider an artificial reference as an additional decision variable and to formulate generalised terminal ingredients with respect to it.
    In addition, its offset is penalised in the MPC optimisation problem, leading to closed-loop convergence to the best reachable reference. 
    In this paper, we provide a transient performance bound on the closed loop using MPC for tracking.
    We employ mild assumptions on the offset cost and scale it with the prediction horizon.
    In this case, an increasing horizon in MPC for tracking recovers the infinite horizon optimal solution. 
\end{abstract}

\begin{IEEEkeywords}
Model predictive control, nonlinear systems, set-point tracking, transient performance
\end{IEEEkeywords}

\section{Introduction}
Nonlinear model predictive control (MPC) is a highly successful control strategy for nonlinear systems subject to constraints, guaranteeing stability and constraint satisfaction while optimising a performance objective.
It has a wide span of application, e.g. in robotics, energy, or process industry.
For a general review of MPC, see, e.g.~\cite{Rawlings2020,Gruene2017}.
A common objective in MPC is stabilisation of an external, possibly changing, reference, which may not correspond to an equilibrium of the plant and for which offline designed parts of MPC, e.g. fixed terminal constraints, are not suitable.

Recently, \emph{MPC for tracking}~\cite{Limon2008} has been introduced as a tracking MPC scheme that uses an artificial reference as an additional decision variable in the MPC's optimisation problem and penalises its offset to the current external reference.
In comparison to standard MPC, it retains feasibility despite changes in the external reference, it possibly allows for shorter prediction horizons, and it guarantees convergence to the best reachable equilibrium with respect to the external reference, which may not be an equilibrium of the plant or unreachable. 
MPC for tracking has been extended to, e.g. nonlinear systems with terminal ingredients~\cite{Limon2018} and without~\cite{Soloperto2022}, as well as to periodic references for linear~\cite{Limon2016} and nonlinear systems~\cite{Koehler2020b}.
Moreover, it has been extended to and applied in distributed MPC, see, e.g.~\cite{Conte2013a,Aboudonia2022a,Koehler2022b}.

Regarding performance, it has been shown for linear systems that MPC for tracking is optimal in a neighbourhood of the external reference~\cite{Ferramosca2009, Ferramosca2011a} for a suitably chosen offset cost.
Apart from this, the degradation of the closed-loop performance when using MPC for tracking, a possible disadvantage, has not been examined.
In this paper, we present a standard MPC for tracking scheme for nonlinear systems and derive bounds on its transient closed-loop performance, which depend on the choice of the offset cost.
In particular, for a suitably chosen offset cost, we show that the closed loop recovers an infinite horizon optimal solution if the horizon tends to infinity.
This is, in general, only the case if the offset cost is scaled with the horizon, which we demonstrate with a counterexample.
We illustrate our results in a simulation of a continuous stirred-tank reactor.

\subsection{Notation}
The natural numbers including 0 are denoted by $\mathbb{N}_0$.
The interior of a set $\mathcal{A}$ is $\mathrm{int}\, \mathcal{A}$.
The set of non-negative reals is $\mathbb{R}_{\ge 0}$.
The set of integers from $a$ to $b$, $a \le b$, is $\mathbb{I}_{a:b}$.
Let $\mathcal{V}$ be a normed space and $\mathcal{V}' \subseteq \mathcal{V}$ be a closed subset.
Then, for $v \in \mathcal{V}$, $\vert v \vert_{\mathcal{V}'} = \min_{v'\in\mathcal{V}'} \Vert v - v' \Vert_{2}$ defines the Euclidean distance of $v$ to $\mathcal{V}'$.
If $\mathcal{V}' = \{v'\}$ for some $v' \in  \mathcal{V}$, we write $\vert v \vert_{v'}$.
The largest eigenvalue of a matrix $A = A^\top$ is denoted by $\mu_{\mathrm{max}}(A)$.
If A is also positive (semi-)definite, we write $\Vert x \Vert_A^2 = x^\top A x$.
We define $\mathcal{B}_{c}(\tilde{x}) = \{ x \in \mathbb{R}^n \mid \Vert x - \tilde{x} \Vert_{2} \le c \}$.
We make use of comparison functions, see, e.g.~\cite{Kellett2014}.

\section{Problem formulation}
Consider a nonlinear, time-invariant discrete-time system
$
    x(t+1) = f(x(t), u(t)) 
$
with state $x(t) \in X \subset \mathbb{R}^n$, input $u(t) \in U \subset \, \mathbb{R}^m$, where $X$ is bounded and $U$ is compact, and continuous dynamics $f: \mathbb{R}^n \times \mathbb{R}^m \to \mathbb{R}^n$.
It is subject to pointwise state and input constraints $(x(t), u(t)) \in \mathcal{Z}$ with compact $\mathcal{Z} \subseteq X \times U$.

Given an initial state $x \in X$ and an input sequence $u = (u(0), u(1), \dots) \in U^K$, the solution starting at $x$ is denoted by $x_u(k, x)$, $k \in \mathbb{I}_{0:K-1}$.
We abbreviate $x_u(k) = x_u(k, x)$ when the initial state $x$ is obvious.
An input sequence $u \in U^K$ is called \emph{admissible}, if $(x_u(k, x), u(k)) \in \mathcal{Z}$ for all $k \in \mathbb{I}_{0:K-1}$ and $x_u(K,x) \in X$.
The set of admissible input sequences $u$ of length $K \in \mathbb{N}\cup\{\infty\}$ starting at $x$ and such that $x_u(K, x) \in \mathbb{X}$ (or $\lim_{K\to\infty} \vert x_u(K, x) \vert_\mathbb{X} = 0$ if $K=\infty$) with some closed set $\mathbb{X}$ is denoted by $\mathbb{U}_{\mathbb{X}}^K(x)$. 

Let $\mathcal{Z}_{\mathrm{r}} \subseteq \mathrm{int}\,\mathcal{Z}$ be closed and define the set of (admissible) references
$
    \mathcal{R} = \{ (x,u) \in \mathcal{Z}_{\mathrm{r}} \mid x = f(x,u) \}.
$
Given $r = (x_r, u_r), \hat{r} = (x_{\hat{r}}, u_{\hat{r}}) \in \mathcal{R}$, define $\vert r \vert_{\hat{r}} = \sqrt{\vert x_r \vert_{x_{\hat{r}}}^2 + \vert u_r \vert_{u_{\hat{r}}}^2}$.

The control goal is to steer the system towards an external reference $r_{\mathrm{e}}$ while satisfying the constraints and minimising a stage cost $\ell: X \times U \times \mathcal{R} \to \mathbb{R}_{\ge 0}$.
If the external reference is not reachable or not an equilibrium of the system, i.e. $r_{\mathrm{e}} \notin \mathcal{R}$, the control goal is to steer the system as close as admissible towards $r_{\mathrm{e}}$.
For this purpose, an offset cost $T$ is introduced, which attains its minimum at $r_{\mathrm{e}}$.
Then, the control goal is to steer the system to an admissible reference that minimises $T$ over $\mathcal{R}$.
For simplicity, we assume that this best reachable reference
$r_{\mathrm{d}} = (x_{\mathrm{d}}, u_{\mathrm{d}}) = \argmin_{r \in \mathcal{R}} T(r)$
is unique and that $T$ is a good indicator of $\vert r \vert_{r_{\mathrm{d}}}$.
\begin{assumption}\label{asm:offset_cost_indication}
    There exist $\alpha_{\mathrm{lo}}^T, \alpha_{\mathrm{up}}^T \in \mathcal{K}_{\infty}$ such that for all $r\in\mathcal{R}$ (not necessarily for all equilibria, e.g. $r_{\mathrm{e}}$)
    \begin{equation}\label{eq:offset_cost_indication}
        \alpha_{\mathrm{lo}}^T(\vert r \vert_{r_{\mathrm{d}}}) \le T(r) \le \alpha_{\mathrm{up}}^T(\vert r \vert_{r_{\mathrm{d}}}).
    \end{equation}
\end{assumption}
For example, $T(r) = \Vert r - r_{\mathrm{e}} \Vert^2_S - \bar{T}$ with $S\succ 0$ and $\bar{T} = \Vert r_{\mathrm{d}} - r_{\mathrm{e}} \Vert^2_S$ satisfies this on bounded constraint sets, since $T(r_{\mathrm{d}}) = 0$ and $T(r) > 0$ for all $r_{\mathrm{d}} \neq r\in\mathcal{R}$ ($r_{\mathrm{d}}$ is unique).
For simplicity, we assume via~\eqref{eq:offset_cost_indication} $T(r_{\mathrm{d}})=0$  w.l.o.g. (adding constants does not change the MPC optimisation problem).
In particular, $r_{\mathrm{d}}$ (and $\bar{T}$) need not be known.
Moreover, we do not consider output tracking (cf.~\cite{Limon2018,Koehler2020b,Soloperto2022}), but conjecture that our results are transferable.
In this case, Assumption~\ref{asm:offset_cost_indication} would need to hold with $T$ penalising an artificial output.

We consider the case where the stage cost is designed with essentially two purposes.
First, it is employed to set up a stabilising MPC scheme.
Second, it encodes a performance objective, e.g. by penalising large inputs, prioritising use of one input over the other or the deviation in more relevant states.
Hence, we are not only interested in stability of the desired reference, but also in how it is achieved, measured by the stage cost.
More precisely, we aim to compare the performance of (optimal) input sequences $u$, defined by
$
    J_K^{\mathrm{d}}(x, u) = \sum_{k=0}^{K-1} \ell(x_u(k, x), u(k), r_{\mathrm{d}}).
$

\section{MPC for tracking}
\subsection{MPC for tracking scheme}
In this section, we propose changes to the standard MPC for tracking scheme for nonlinear systems~\cite{Limon2018} and show that the best reachable steady state is asymptotically stabilised.

We make the following assumptions on the stage cost, the first two similar to~\cite[Assm. 1]{Soloperto2022}.
The first states that the stage cost provides an adequate measure of the state's distance to the artificial steady state, and the second and third allow for a comparison of stage costs with different references.
\begin{assumption}\label{asm:stage_cost_lower_and_upper_bound}
    There exist $c_1^{\ell}, c_2^{\ell} > 0$ such that for all $(x,u) \in \mathcal{Z}$ and $r = (x_r, u_r) \in \mathcal{R}$
    \begin{equation}\label{eq:stage_cost_lower_and_upper_bound}
        c_1^{\ell} \vert x \vert_{x_r}^2 \le \bar{\ell}(x, r) \le c_2^{\ell} \vert x \vert_{x_r}^2
    \end{equation}
    with
    $
        \bar{\ell}(x, r) = \min_{u \in U\,\text{s.t.}\,(x,u)\in\mathcal{Z}}\ell(x, u, r).
    $
\end{assumption}
\begin{assumption}\label{asm:stage_cost_difference_bound}
    There exist $c_3^{\ell}, c_4^{\ell} > 0$ such that for any $r_{1}, r_{2} \in \mathcal{R}$ and $(x, u) \in \mathcal{Z}$
    \begin{equation}\label{eq:stage_cost_difference_bound}
        \ell(x, u, r_{1}) \le c_3^{\ell} \ell(x, u, r_{2})+ c_4^{\ell} \vert r_{1} \vert_{r_{2}}^2.
    \end{equation}
\end{assumption}
\begin{assumption}\label{asm:stage_cost_difference_bound_with_linear_term}
    There exist $c_5^{\ell}, c_6^{\ell} > 0$ such that for any $r_{1}, r_{2} \in \mathcal{R}$ and $(x, u) \in \mathcal{Z}$
    \begin{equation}\label{eq:stage_cost_difference_bound_with_linear_term}
        \ell(x, u, r_{1}) \le \ell(x, u, r_{2}) + c_5^{\ell} \vert r_{1} \vert_{r_{2}}^2 + c_6^{\ell} \vert r_{1} \vert_{r_{2}}.
    \end{equation}
\end{assumption}
\begin{remark}
    For example, the common quadratic stage cost $\ell(x, u, r) = \Vert x - x_r \Vert_Q^2 + \Vert u - u_r \Vert_R^2$ with $Q,R \succ 0$
    satisfies Assumptions~\ref{asm:stage_cost_lower_and_upper_bound}--\ref{eq:stage_cost_difference_bound_with_linear_term} on bounded constraint sets.
    We refer to~\cite{Soloperto2022} for Assumption~\ref{asm:stage_cost_difference_bound} while Assumption~\ref{asm:stage_cost_difference_bound_with_linear_term} follows from
    $
        \ell(x, u, r_1) 
        = \Vert x - x_{r_1} + x_{r_2} - x_{r_2} \Vert_Q^2 + \Vert u - u_{r_1} + u_{r_2} - u_{r_2} \Vert_R^2 
        = \ell(x, u, r_2) + \Vert x_{r_2} - x_{r_1} \Vert_Q^2 + \Vert u_{r_1} - u_{r_2} \Vert_R^2
        + 2(x - x_{r_1})^\top Q (x_{r_1} - x_{r_2}) + 2( u - u_{r_1})^\top R (u_{r_1} - u_{r_2}),
    $
    the Cauchy-Schwarz inequality, $c_5^{\ell} = \max(\mu_{\mathrm{max}}(Q), \mu_{\mathrm{max}}(R))$ and $c_6^{\ell} = 2\sqrt{2}c_5^{\ell}\max(\sup_{x\in X, r\in\mathcal{R}} \vert x \vert_{x_r}, \sup_{u\in U, r\in\mathcal{R}} \vert u \vert_{u_r})$.
\end{remark}

We use suitable terminal ingredients to ensure recursive feasibility and stability of the closed-loop system.
\begin{assumption}\label{asm:terminal_ingredients}
    There exist $c_{\mathrm{b}}, c_{\mathrm{f}} > 0$, a control law
    $
    k_{\mathrm{f}}: X \times \mathcal{R} \to \mathbb{R}^m
    $, a continuous terminal cost
    $
    V^{\mathrm{f}}: X \times \mathcal{R} \to \mathbb{R}_{\ge 0}
    $ and compact terminal sets
    $
    \mathcal{X}^{\mathrm{f}}(r) \subseteq X
    $ such that for any $r = (x_r, u_r)\in \mathcal{R}$ and any $x \in \mathcal{X}^{\mathrm{f}}(r)$
    \begin{subequations}
        \begin{align}
            x^{+} = f(x, k_{\mathrm{f}}(x, r)) &\in \mathcal{X}^{\mathrm{f}}(r),
            \label{eq:terminal_invariance}
                \\
            V^{\mathrm{f}}(x^{+}, r) - V^{\mathrm{f}}(x, r) 
            &\le  - \ell(x, k_{\mathrm{f}}(x, r), r), \label{eq:terminal_cost_decrease}
                \\
            (x, k_{\mathrm{f}}(x, r)) &\in \mathcal{Z},
            \label{eq:terminal_constraint_satisfaction}
                \\
            \mathcal{B}_{c_{\mathrm{b}}}(x_r) &\subseteq \mathcal{X}^{\mathrm{f}}(r), \label{eq:non-trivial_terminal_set}
                \\
            V^{\mathrm{f}}(x, r) &\le c_{\mathrm{f}} \bar{\ell}(x, r). \label{eq:terminal_cost_upper_bound}
        \end{align}
    \end{subequations}
\end{assumption}
Conditions~\eqref{eq:terminal_invariance}--\eqref{eq:terminal_cost_upper_bound} (combined with~\eqref{eq:stage_cost_lower_and_upper_bound}) are standard in MPC for tracking (cf.~\cite[Assm. 3]{Limon2018}, \cite[Assm. 2]{Koehler2020b}, \cite[Assm. 2]{Koehler2020a}).
The non-degeneracy condition~\eqref{eq:non-trivial_terminal_set} (cf.~\cite[Lem. 5]{Koehler2020b}, \cite[Assm. 2]{Koehler2020a}) allows us to uniformly conclude that $x\in\mathcal{X}^{\mathrm{f}}(r)$ if $\vert x \vert_{x_{r}} \le c_{\mathrm{b}}$. See, e.g.~\cite{Koehler2020a} for designing suitable ingredients.

The optimisation problem used in the MPC for tracking scheme at time $t$ with measured state $x = x(t)$ is
\begin{equation}\label{eq:MPC_for_tracking}
        V_N(x) = \min_{\mathclap{\substack{
            r(x) \in \mathcal{R},
                \\
            u(\cdot \vert t) \in \mathbb{U}^N_{\mathcal{X}^{\mathrm{f}}(r(x))}(x)
        }}} 
        J_N^{\mathrm{tc}}(x, u(\cdot \vert t), r(x)) + \lambda(N) T(r(x))
\end{equation}
where the tracking part of the objective function is
$
    J_N^{\mathrm{tc}}(x, u, r) = \sum_{k=0}^{N-1} \ell(x_u(k, x), u(k), r) + V^\mathrm{f}(x_u(N, x), r)
$,
$N\in\mathbb{N}_0$ is the prediction horizon
and $\lambda:\mathbb{N}_0 \to \mathbb{R}_{\ge 0}$ is a scaling function.
We denote the optimal input sequence by $u_N^*(\cdot\vert t)$ and the optimal artificial reference by $r_N^*(x(t))$.
The set of all states for which~\eqref{eq:MPC_for_tracking} is feasible is denoted by $\mathcal{X}_N$.

Compared to usual formulations of MPC for tracking, see e.g.~\cite{Limon2018}, we include a scaling function $\lambda(N)$.
By rescaling $T$ for fixed $N$ or setting $\lambda(N) \equiv 1$, the usual formulation is recovered for all $N\in\mathbb{N}_0$.
We will, however, show that scaling the offset cost with the horizon is essential, when the latter is varied for performance.
For this purpose, we assume that the scaling function is at least linear and unbounded.
\begin{assumption}\label{asm:superlinear_scaling}
    The scaling function $\lambda$ in~\eqref{eq:MPC_for_tracking} satisfies $\lambda(N) \ge N$ for all $N\in\mathbb{N}_0$ and $\lambda(0) \ge 1$.
\end{assumption}

The MPC for tracking scheme is as follows:
At each time step, we measure $x(t)$, solve~\eqref{eq:MPC_for_tracking} and apply the first part of the optimal input trajectory, $\mu_{N}(x(t)) = u_N^*(0 \vert t)$, leading to the closed-loop system
\begin{equation}\label{eq:closed-loop_system}
    x(t+1) = f(x(t), \mu_{N}(x(t))).
\end{equation}

\subsection{Exponential stability}
The main idea in showing stability of the best reachable reference is that if the offset cost is non-zero for the current artificial reference, there exists a better one in the next time step that reduces the offset cost more than it increases the tracking cost.
Thus, the artificial reference will be at least incrementally moved towards a minimiser of $T$, with the closed loop following due to the tracking part. 
The following assumption, adapted from~\cite[Assm. 3]{Soloperto2022}, provides us with a reference with a lower offset cost.
\begin{assumption}\label{asm:better_candidate_reference}
    There exist $c_{1}^\mathrm{r}, c_{2}^\mathrm{r} > 0$ such that for any $r=(x_r,u_r)\in\mathcal{R}$, and any $\theta \in [0,1]$, there exists $\hat{r}=(x_{\hat{r}}, u_{\hat{r}})\in\mathcal{R}$ with
    \begin{align}
        \vert \hat{r} \vert_{r} &\le c_{1}^\mathrm{r} \theta \vert r \vert_{r_{{\mathrm{d}}}}, \label{eq:candidate_reference_upper_bound}
            \\
        T(\hat{r}) - T(r) &\le -c_{2}^\mathrm{r} \theta \vert r \vert_{r_{{\mathrm{d}}}}^2. \label{eq:candidate_reference_decrease}
    \end{align}
\end{assumption}
This assumption generalises usual assumptions in MPC for tracking, cf.~\cite[Assm. 1 \& 2]{Limon2018}, \cite[Assm. 6]{Koehler2020b}, which impose (strong) convexity on the offset cost, convexity of the set of references and a uniqueness condition. See~\cite[Assm. 4]{Soloperto2022} for an extension to a non-convex set of references.

In the next lemma, we show that the tracking part of the objective function is bounded by the stage cost if the state is sufficiently close to the reference (cf.~\cite[Assm. 2]{Koehler2020b}).
\begin{lemma}\label{lem:feasible_inched_candidate}
    Suppose Assumptions~\ref{asm:stage_cost_lower_and_upper_bound} and~\ref{asm:terminal_ingredients} hold.
    Consider the following optimisation problem where $\hat{r}$ is a fixed parameter:
    $\min_{u \in \mathbb{U}^N_{\mathcal{X}^{\mathrm{f}}(\hat{r})}(x)} J_N^{\mathrm{tc}}(x, u, \hat{r})$.
    Then, there exists $\varepsilon > 0$ such that for any $\hat{r} \in \mathcal{R}$ and any $\hat{x} \in X$ with $\bar{\ell}(\hat{x}, \hat{r}) \le \varepsilon$, 
    this optimisation problem
    is feasible and its solution $\hat{u}$ satisfies the following bound with $c_{\mathrm{f}}$ from Assumption~\ref{asm:terminal_ingredients}:
        \begin{equation}\label{eq:tracking_value_function_upper_bound}
            J_N^{\mathrm{tc}}(\hat{x}, \hat{u}, \hat{r}) \le c_{\mathrm{f}} \bar{\ell}(\hat{x}, \hat{r}).
        \end{equation}
\end{lemma}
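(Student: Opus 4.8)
The plan is to use the terminal control law $k_{\mathrm{f}}(\cdot, \hat{r})$ as a feasible candidate input and then telescope the terminal cost decrease~\eqref{eq:terminal_cost_decrease} to bound the resulting tracking cost by the terminal cost, which~\eqref{eq:terminal_cost_upper_bound} in turn controls by $c_{\mathrm{f}}\bar{\ell}$. The whole argument hinges on first forcing $\hat{x}$ into the terminal set $\mathcal{X}^{\mathrm{f}}(\hat{r})$ so that the terminal ingredients may be invoked along the entire horizon.

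First I would fix $\varepsilon$. From the lower bound in~\eqref{eq:stage_cost_lower_and_upper_bound} we have $c_1^{\ell} \vert \hat{x} \vert_{x_{\hat{r}}}^2 \le \bar{\ell}(\hat{x}, \hat{r}) \le \varepsilon$, so choosing $\varepsilon = c_1^{\ell} c_{\mathrm{b}}^2$ yields $\vert \hat{x} \vert_{x_{\hat{r}}} \le c_{\mathrm{b}}$, and the non-degeneracy condition~\eqref{eq:non-trivial_terminal_set} then gives $\hat{x} \in \mathcal{B}_{c_{\mathrm{b}}}(x_{\hat{r}}) \subseteq \mathcal{X}^{\mathrm{f}}(\hat{r})$. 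Because $c_1^{\ell}$ and $c_{\mathrm{b}}$ are uniform constants from Assumptions~\ref{asm:stage_cost_lower_and_upper_bound} and~\ref{asm:terminal_ingredients}, this $\varepsilon$ is independent of both $\hat{r}$ and $N$, which is what the uniform statement of the lemma requires.

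Next I would construct the candidate by closing the loop with the terminal controller, i.e. setting $u(k) = k_{\mathrm{f}}(x_u(k, \hat{x}), \hat{r})$ for $k \in \mathbb{I}_{0:N-1}$. A short induction using the terminal invariance~\eqref{eq:terminal_invariance} shows $x_u(k, \hat{x}) \in \mathcal{X}^{\mathrm{f}}(\hat{r})$ for every $k \in \mathbb{I}_{0:N}$; together with~\eqref{eq:terminal_constraint_satisfaction} this makes the candidate admissible with terminal state in $\mathcal{X}^{\mathrm{f}}(\hat{r})$, i.e. an element of $\mathbb{U}^N_{\mathcal{X}^{\mathrm{f}}(\hat{r})}(\hat{x})$, so the optimisation problem is feasible. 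Summing~\eqref{eq:terminal_cost_decrease} along this trajectory telescopes to $\sum_{k=0}^{N-1} \ell(x_u(k), u(k), \hat{r}) + V^{\mathrm{f}}(x_u(N), \hat{r}) \le V^{\mathrm{f}}(\hat{x}, \hat{r})$, whose left-hand side is precisely $J_N^{\mathrm{tc}}(\hat{x}, u, \hat{r})$ for the candidate. Since $V^{\mathrm{f}} \ge 0$ there is no need to discard the terminal term, so the bound holds for every $N$. Applying~\eqref{eq:terminal_cost_upper_bound} gives $J_N^{\mathrm{tc}}(\hat{x}, u, \hat{r}) \le V^{\mathrm{f}}(\hat{x}, \hat{r}) \le c_{\mathrm{f}} \bar{\ell}(\hat{x}, \hat{r})$, and since the minimiser $\hat{u}$ can only do better, the bound~\eqref{eq:tracking_value_function_upper_bound} follows.

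The argument is essentially routine, as each inequality used is a direct assumption hypothesis. The only point requiring care is the one carried out first: ensuring $\hat{x}$ genuinely lies in $\mathcal{X}^{\mathrm{f}}(\hat{r})$ so that~\eqref{eq:terminal_invariance}--\eqref{eq:terminal_cost_upper_bound} remain applicable along the whole horizon. This is exactly the role of the non-degeneracy condition~\eqref{eq:non-trivial_terminal_set} and of choosing $\varepsilon$ uniformly small, and I expect this bookkeeping — rather than any deep estimate — to be the main thing to get right.
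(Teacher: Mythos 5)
Your proposal is correct and follows essentially the same route as the paper: the same choice $\varepsilon \le c_1^{\ell}c_{\mathrm{b}}^2$ to force $\hat{x} \in \mathcal{B}_{c_{\mathrm{b}}}(x_{\hat{r}}) \subseteq \mathcal{X}^{\mathrm{f}}(\hat{r})$ via~\eqref{eq:stage_cost_lower_and_upper_bound} and~\eqref{eq:non-trivial_terminal_set}, the terminal controller as feasible candidate, and the telescoped decrease~\eqref{eq:terminal_cost_decrease} followed by~\eqref{eq:terminal_cost_upper_bound}. The only difference is that you spell out the induction and telescoping steps which the paper dismisses as standard (citing Rawlings), so there is nothing to change.
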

\begin{proof}
    Choose $\varepsilon \le c_1^{\ell}c_{\mathrm{b}}^2$ with $c_1^{\ell}$ from Assumption~\ref{asm:stage_cost_lower_and_upper_bound} and $c_{\mathrm{b}}$ from Assumption~\ref{asm:terminal_ingredients}.
    Then, from~\eqref{eq:stage_cost_lower_and_upper_bound},
    $
        c_{1}^{\ell} \vert \hat{x} \vert_{x_{\hat{r}}}^2 \le \bar{\ell}(\hat{x}, \hat{r}) \le \varepsilon \le c_1^{\ell}c_{\mathrm{b}}^2
    $
    which implies $\vert \hat{x} \vert_{x_{\hat{r}}} \le c_{\mathrm{b}}$, hence, from Assumption~\ref{asm:terminal_ingredients}, $\hat{x} \in \mathcal{B}_{c_{\mathrm{b}}}(x_{\hat{r}}) \subseteq \mathcal{X}^{\mathrm{f}}(\hat{r})$.
    The following steps are standard, cf.~\cite[Prop. 2.35]{Rawlings2020}.
    Within the terminal set $\mathcal{X}^{\mathrm{f}}(\hat{r})$, the terminal controller $k_{\mathrm{f}}$ constitutes a feasible input sequence 
    in the above optimisation problem.
    Thus, from~\eqref{eq:terminal_cost_decrease}, the terminal cost $V^{\mathrm{f}}$ is an upper bound on $J_N^{\mathrm{tc}}(\hat{x}, \hat{u}, \hat{r})$.
    The upper bound~\eqref{eq:tracking_value_function_upper_bound} then follows from~\eqref{eq:terminal_cost_upper_bound}.
\end{proof}

Next, we show that the closed-loop stage cost is an upper bound on the distance of the artificial reference to the best reachable reference.
This has been shown first in~\cite[Prop. 2]{Soloperto2022} for a scheme without terminal ingredients.
\begin{lemma}\label{lem:stage_cost_upper_bounds_reference_distance}
    Let Assumptions~\ref{asm:stage_cost_lower_and_upper_bound},~\ref{asm:stage_cost_difference_bound},~\ref{asm:terminal_ingredients},~\ref{asm:superlinear_scaling} and~\ref{asm:better_candidate_reference} hold.
    Then, there exists $c_{\mathrm{d}}^{\ell} > 0$ such that for any $N \in \mathbb{N}_0$ and $x \in \mathcal{X}_N$,
    \begin{equation}\label{eq:stage_cost_upper_bounds_reference_distance}
        \ell(x, \mu_{N}(x), r_N^*(x)) \ge c_{\mathrm{d}}^{\ell} 
        \vert r_N^*(x) \vert_{r_{{\mathrm{d}}}}^2.
    \end{equation}
\end{lemma}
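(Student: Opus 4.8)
The plan is to pit the optimal pair $(u_N^*(\cdot\vert t), r_N^*(x))$ in~\eqref{eq:MPC_for_tracking} against the ``inched'' competitor supplied by Assumption~\ref{asm:better_candidate_reference}. Writing $r := r_N^*(x)$ and $\mu_N(x) = u_N^*(0\vert t)$, the guiding intuition is that nudging the artificial reference towards $r_{\mathrm{d}}$ lowers the offset cost at first order in the step size $\theta$, while raising the tracking cost only at second order; optimality then forces the current stage cost to account for $\vert r\vert_{r_{\mathrm{d}}}$. Since $\mathcal{Z}$ (hence $\mathcal{R}$) is compact, $d_{\mathrm{max}} := \sup_{r'\in\mathcal{R}}\vert r'\vert_{r_{\mathrm{d}}}<\infty$. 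I would fix constants $\theta_0\in(0,1]$ and $\varepsilon_0>0$ (pinned down at the very end) and split on the size of $\bar\ell(x,r)$, where $\varepsilon$ denotes the constant from Lemma~\ref{lem:feasible_inched_candidate}.

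In the easy case $\bar\ell(x,r)>\varepsilon_0$, the bound is immediate from boundedness: as $\mu_N(x)$ is admissible at $x$, one has $\ell(x,\mu_N(x),r)\ge\bar\ell(x,r)>\varepsilon_0\ge(\varepsilon_0/d_{\mathrm{max}}^2)\vert r\vert_{r_{\mathrm{d}}}^2$. This is where states far from $x_r$ are absorbed harmlessly.

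The substantial case is $\bar\ell(x,r)\le\varepsilon_0$. Here I would apply Assumption~\ref{asm:better_candidate_reference} with $\theta=\theta_0$ to get $\hat r\in\mathcal{R}$ obeying~\eqref{eq:candidate_reference_upper_bound}--\eqref{eq:candidate_reference_decrease}. Applying Assumption~\ref{asm:stage_cost_difference_bound} with $(r_1,r_2)=(\hat r,r)$ at a minimiser of $\bar\ell(x,r)$ gives $\bar\ell(x,\hat r)\le c_3^\ell\bar\ell(x,r)+c_4^\ell\vert\hat r\vert_r^2\le c_3^\ell\varepsilon_0+c_4^\ell(c_1^{\mathrm{r}})^2\theta_0^2 d_{\mathrm{max}}^2$, which I will force below $\varepsilon$ by the choice of $\theta_0,\varepsilon_0$. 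Lemma~\ref{lem:feasible_inched_candidate} then supplies a feasible $\hat u\in\mathbb{U}^N_{\mathcal{X}^{\mathrm{f}}(\hat r)}(x)$ with $J_N^{\mathrm{tc}}(x,\hat u,\hat r)\le c_{\mathrm{f}}\bar\ell(x,\hat r)$, so $(\hat u,\hat r)$ is feasible in~\eqref{eq:MPC_for_tracking}. Comparing its cost with $V_N(x)$ and discarding the non-negative term $J_N^{\mathrm{tc}}(x,u_N^*(\cdot\vert t),r)$ gives
\begin{equation*}
\lambda(N)\big(T(r)-T(\hat r)\big)\le J_N^{\mathrm{tc}}(x,\hat u,\hat r)\le c_{\mathrm{f}}\bar\ell(x,\hat r).
\end{equation*}
Bounding the left side below by $c_2^{\mathrm{r}}\lambda(N)\theta_0\vert r\vert_{r_{\mathrm{d}}}^2$ via~\eqref{eq:candidate_reference_decrease}, and the right side above through $\bar\ell(x,\hat r)\le c_3^\ell\ell(x,\mu_N(x),r)+c_4^\ell(c_1^{\mathrm{r}})^2\theta_0^2\vert r\vert_{r_{\mathrm{d}}}^2$ (Assumption~\ref{asm:stage_cost_difference_bound} with $\bar\ell(x,r)\le\ell(x,\mu_N(x),r)$ and~\eqref{eq:candidate_reference_upper_bound}), I expect to arrive at
\begin{equation*}
c_{\mathrm{f}}c_3^\ell\,\ell(x,\mu_N(x),r)\ge\big(\lambda(N)c_2^{\mathrm{r}}\theta_0-c_{\mathrm{f}}c_4^\ell(c_1^{\mathrm{r}})^2\theta_0^2\big)\vert r\vert_{r_{\mathrm{d}}}^2.
\end{equation*}

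The final step is to keep the bracket uniformly positive in $N$, and this is where the main difficulty lies. Assumption~\ref{asm:superlinear_scaling} enters only through $\lambda(N)\ge1$: choosing $\theta_0\le c_2^{\mathrm{r}}/(2c_{\mathrm{f}}c_4^\ell(c_1^{\mathrm{r}})^2)$ makes the second-order term at most half the first-order one, so the bracket is at least $\tfrac12 c_2^{\mathrm{r}}\theta_0$ for every $N$, yielding $\ell(x,\mu_N(x),r)\ge c_2^{\mathrm{r}}\theta_0/(2c_{\mathrm{f}}c_3^\ell)\vert r\vert_{r_{\mathrm{d}}}^2$; then $c_{\mathrm{d}}^\ell$ is the minimum of this constant and $\varepsilon_0/d_{\mathrm{max}}^2$. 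There is no circularity: I would first fix $\theta_0$ as above, then shrink $\varepsilon_0$ until both ``$\le\varepsilon$'' requirements hold. The essential obstacle is the case split itself --- Lemma~\ref{lem:feasible_inched_candidate} only produces the cheap candidate when $x$ is already near the reference, so the far-away states must be handled separately, and the first-order decrease must be balanced against the second-order increase in $\theta_0$ so that the constant survives as $N\to\infty$.
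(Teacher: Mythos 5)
Your proof is correct and is essentially the paper's own argument in direct (contrapositive) form: the paper runs the identical machinery --- the inched reference from Assumption~\ref{asm:better_candidate_reference}, the comparison bound of Assumption~\ref{asm:stage_cost_difference_bound}, the cheap feasible candidate from Lemma~\ref{lem:feasible_inched_candidate}, optimality of $(u_N^*, r_N^*)$ together with $\lambda(N)\ge 1$, and the same first-order-versus-second-order balance in $\theta$ --- as a proof by contradiction, where the hypothesis $\ell(x,\mu_N(x),r_N^*) < c_{\mathrm{d}}^{\ell}\vert r_N^*\vert_{r_{\mathrm{d}}}^2$ (combined with compactness of $\mathcal{R}$) supplies exactly the smallness that your case split $\bar{\ell}(x,r)\le\varepsilon_0$ supplies, so no separate ``far'' case is needed there. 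The only bookkeeping slip is your closing claim that, after fixing $\theta_0$, shrinking $\varepsilon_0$ alone meets the requirement $c_3^{\ell}\varepsilon_0 + c_4^{\ell}(c_1^{\mathrm{r}})^2\theta_0^2 d_{\mathrm{max}}^2\le\varepsilon$: the second term does not depend on $\varepsilon_0$, so $\theta_0$ must also be taken small enough, which is immediate and harmless since every constraint you place on $\theta_0$ is an upper bound.
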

\begin{proof}
    Let $x \in \mathcal{X}_N$ and assume with $ r_N^* = r_N^*(x)$
    \begin{equation}\label{eq:stage_cost_upper_bounds_reference_distance_reverse}
        \ell(x, \mu_{N}(x), r_N^*) < c_{\mathrm{d}}^{\ell} \vert r_N^* \vert_{r_{{\mathrm{d}}}}^2.
    \end{equation}
    Consider the candidate reference $\hat{r} = (x_{\hat{r}}, u_{\hat{r}})$ from Assumption~\ref{asm:better_candidate_reference}. 
    Since $\mathcal{R}$ is compact, there exists a constant $\delta > 0$ with $\delta = \sup_{r\in\mathcal{R}} \vert r \vert_{r_{\mathrm{d}}}$.
    Then, from Assumption~\ref{asm:stage_cost_difference_bound}
    \begin{align}
        &\bar{\ell}(x, \hat{r}) \le \ell(x, \mu_{N}(x), \hat{r}) 
        \stackrel{\eqref{eq:stage_cost_difference_bound}}{\le} c_3^{\ell} \ell(x, \mu_{N}(x), r_N^*) + c_4^{\ell} \vert \hat{r} \vert_{r_N^*}^2 
            \notag
            \\
        &\stackrel{\mathclap{\eqref{eq:stage_cost_upper_bounds_reference_distance_reverse},\eqref{eq:candidate_reference_upper_bound}}}{<} 
        (c_3^{\ell}c_{\mathrm{d}}^{\ell} + c_4^{\ell}(c_1^{\mathrm{r}})^2 \theta^2) \vert r_N^* \vert_{r_{\mathrm{d}}}^2 \le (c_3^{\ell}c_{\mathrm{d}}^{\ell} + c_4^{\ell}(c_1^{\mathrm{r}})^2 \theta^2) \delta^2 \le \varepsilon
        \label{eq:stage_cost_distance_of_reference_bound}
    \end{align}
    where the last inequality follows by choosing $c_{\mathrm{d}}^{\ell}$ and $\theta$ sufficiently small.
    Then, from Lemma~\ref{lem:feasible_inched_candidate}, there exists an input sequence $\hat{u}$ such that $(\hat{u}, \hat{r})$ is feasible for problem \eqref{eq:MPC_for_tracking} and
    \begin{align}\label{eq:combined_candidate_tracking_cost_bound}
        \hspace{-0.5em}
        J_N^{\mathrm{tc}}(x, \hat{u}, \hat{r})
        \stackrel{\mathclap{\eqref{eq:tracking_value_function_upper_bound}}}{\le} c_{\mathrm{f}} \bar{\ell}(x, \hat{r})
        \stackrel{\mathclap{\eqref{eq:stage_cost_distance_of_reference_bound}}}{<} 
        c_{\mathrm{f}}(c_3^{\ell}c_{\mathrm{d}}^{\ell} + c_4^{\ell}(c_1^{\mathrm{r}})^2 \theta^2) \vert r_N^* \vert_{r_{\mathrm{d}}}^2. 
    \end{align}
    Note that $J_N^{\mathrm{tc}}(x, u_N^*, r_N^*) \ge 0$.
    Finally, we arrive at
    \begin{align*}
        &J_N^{\mathrm{tc}}(x, \hat{u}, \hat{r}) - J_N^{\mathrm{tc}}(x, u_N^*, r_N^*) + \lambda(N)(T(\hat{r}) - T(r_N^*)) 
            \\
        &\stackrel{\eqref{eq:combined_candidate_tracking_cost_bound},\eqref{eq:candidate_reference_decrease}}{<} 
        (c_{\mathrm{f}}(c_3^{\ell}c_{\mathrm{d}}^{\ell} + c_4^{\ell}(c_1^{\mathrm{r}})^2 \theta^2) - c_2^{\mathrm{r}}\theta) \vert r_N^* \vert_{r_{\mathrm{d}}}^2 = - \bar{c}_{\mathrm{d}}^{\ell} \vert r_N^* \vert_{r_{\mathrm{d}}}^2
    \end{align*}
    with $\bar{c}_{\mathrm{d}}^{\ell} = - (c_{\mathrm{f}}c_3^{\ell}c_{\mathrm{d}}^{\ell} + (c_{\mathrm{f}}c_4^{\ell}(c_1^{\mathrm{r}})^2 \theta - c_2^{\mathrm{r}})\theta)$, where we used that $\lambda(N) \ge 1$.
    By first fixing $\theta < c_2^{\mathrm{r}}(c_{\mathrm{f}}c_4^{\ell}(c_1^{\mathrm{r}})^2)^{-1}$
    and then choosing $c_{\mathrm{d}}^{\ell} < (c_{\mathrm{f}}c_4^{\ell}(c_1^{\mathrm{r}})^2 \theta - c_2^{\mathrm{r}})\theta(c_{\mathrm{f}}c_3^{\ell})^{-1}$, we have $\bar{c}_{\mathrm{d}}^{\ell} > 0$.
    Hence, the cost of the candidate $(\hat{u}, \hat{r})$ is smaller than that of the optimal $(u_N^*, r_N^*)$, which is a contradiction.
\end{proof}

In the following, we show stability of the closed loop~\eqref{eq:closed-loop_system}, using the value function $V_N$
as a Lyapunov candidate.
\begin{theorem}\label{thm:exponential_stability}
    Let Assumptions~\ref{asm:stage_cost_lower_and_upper_bound},~\ref{asm:stage_cost_difference_bound},~\ref{asm:terminal_ingredients},~\ref{asm:superlinear_scaling} and~\ref{asm:better_candidate_reference} hold, and let $N\in\mathbb{N}_0$.
    Assume that~\eqref{eq:MPC_for_tracking} is feasible at time $t=0$, i.e. $x(0) \in \mathcal{X}_N$.
    Then,~\eqref{eq:MPC_for_tracking} is feasible for all $t\in\mathbb{N}_0$ and $x_{\mathrm{d}}$ is exponentially stable for the closed loop~\eqref{eq:closed-loop_system} with region of attraction $\mathcal{X}_N$. 
    That is, there exist $c_{\mathrm{exp}} > 0$ and $\gamma_{\mathrm{exp}} \in (0,1)$ such that for all $x \in \mathcal{X}_N$ and $t \in \mathbb{N}_0$
    \begin{equation}\label{eq:exponential_stability}
        \vert x_{\mu_N}(t,x) \vert_{x_{\mathrm{d}}} \le c_{\mathrm{exp}} \vert x \vert_{x_{\mathrm{d}}} \gamma_{\mathrm{exp}}^t.
    \end{equation}
\end{theorem}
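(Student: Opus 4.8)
The plan is to use the optimal value function $V_N$ as a Lyapunov function for the closed loop~\eqref{eq:closed-loop_system} and to establish, with respect to $\vert x \vert_{x_{\mathrm{d}}}$, the three ingredients of an exponential Lyapunov estimate: a quadratic lower bound, a quadratic upper bound on $\mathcal{X}_N$, and a quadratic decrease. Recursive feasibility and the decrease will both follow from the standard shifted-sequence construction. Given the optimiser $(u_N^*(\cdot\vert t), r_N^*(x(t)))$ at $x=x(t)$, I would keep the artificial reference fixed, $\tilde{r}=r_N^*(x(t))$, shift the input, $\tilde{u}(k)=u_N^*(k+1\vert t)$ for $k\in\mathbb{I}_{0:N-2}$, and append the terminal controller $\tilde{u}(N-1)=k_{\mathrm{f}}(x_{u_N^*}(N),\tilde{r})$. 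By terminal invariance~\eqref{eq:terminal_invariance} and~\eqref{eq:terminal_constraint_satisfaction} this candidate is admissible and steers $x(t+1)$ into $\mathcal{X}^{\mathrm{f}}(\tilde{r})$, so $x(t+1)\in\mathcal{X}_N$ and feasibility for all $t\in\mathbb{N}_0$ follows by induction. Since $\tilde{r}=r_N^*(x(t))$, the offset term $\lambda(N)T(\tilde{r})$ is unchanged, and the usual telescoping via the terminal decrease~\eqref{eq:terminal_cost_decrease} gives $V_N(x(t+1)) \le V_N(x(t)) - \ell(x(t),\mu_N(x(t)),r_N^*(x(t)))$.

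The crucial step is to convert this stage-cost decrease, which a priori only controls $\vert x \vert_{x_{r_N^*}}$, into a decrease in $\vert x \vert_{x_{\mathrm{d}}}$. Here Lemma~\ref{lem:stage_cost_upper_bounds_reference_distance} is essential: it bounds $\vert r_N^*(x) \vert_{r_{\mathrm{d}}}^2$ by the stage cost, while~\eqref{eq:stage_cost_lower_and_upper_bound} bounds $\vert x \vert_{x_{r_N^*}}^2$ by it. Combining the triangle inequality $\vert x \vert_{x_{\mathrm{d}}}^2 \le 2\vert x \vert_{x_{r_N^*}}^2 + 2\vert r_N^* \vert_{r_{\mathrm{d}}}^2$ (using $\vert x_{r_N^*}\vert_{x_{\mathrm{d}}} \le \vert r_N^* \vert_{r_{\mathrm{d}}}$) with these two bounds yields a constant $\underline{c}>0$ with $\ell(x,\mu_N(x),r_N^*(x)) \ge \underline{c}\,\vert x \vert_{x_{\mathrm{d}}}^2$. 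This simultaneously delivers the decrease $V_N(x(t+1)) - V_N(x(t)) \le -\underline{c}\,\vert x(t)\vert_{x_{\mathrm{d}}}^2$ and, since $V_N(x) \ge \ell(x,\mu_N(x),r_N^*(x))$ (the remaining stage costs and the terminal cost being non-negative), the lower bound $V_N(x) \ge \underline{c}\,\vert x \vert_{x_{\mathrm{d}}}^2$.

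For the quadratic upper bound I would use the desired reference itself as the artificial one: taking $\hat{r}=r_{\mathrm{d}}$ gives $T(r_{\mathrm{d}})=0$, so for $x$ close enough to $x_{\mathrm{d}}$ (i.e. $\bar{\ell}(x,r_{\mathrm{d}})\le\varepsilon$) Lemma~\ref{lem:feasible_inched_candidate} makes $(\hat{u},r_{\mathrm{d}})$ feasible and, by~\eqref{eq:tracking_value_function_upper_bound} and~\eqref{eq:stage_cost_lower_and_upper_bound}, $V_N(x) \le J_N^{\mathrm{tc}}(x,\hat{u},r_{\mathrm{d}}) \le c_{\mathrm{f}}\bar{\ell}(x,r_{\mathrm{d}}) \le c_{\mathrm{f}}c_2^{\ell}\vert x \vert_{x_{\mathrm{d}}}^2$. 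This is only local, so I would extend it to all of $\mathcal{X}_N$ by compactness: as $\mathcal{Z}$ and $\mathcal{R}$ are compact and $\lambda(N)$ is fixed, $V_N$ is bounded on $\mathcal{X}_N$ by some $V_{\max}$, and outside the local ball one has $\vert x \vert_{x_{\mathrm{d}}}\ge\rho>0$, whence $V_N(x)\le (V_{\max}/\rho^2)\vert x \vert_{x_{\mathrm{d}}}^2$ there; taking the larger constant gives $V_N(x)\le\bar{c}\,\vert x \vert_{x_{\mathrm{d}}}^2$ on $\mathcal{X}_N$. With $\underline{c}\,\vert x \vert_{x_{\mathrm{d}}}^2 \le V_N(x) \le \bar{c}\,\vert x \vert_{x_{\mathrm{d}}}^2$ and the decrease, the standard estimate $V_N(x(t+1)) \le (1-\underline{c}/\bar{c})V_N(x(t))$ propagates to $V_N(x(t)) \le (1-\underline{c}/\bar{c})^t V_N(x(0))$, and sandwiching between the two quadratic bounds yields~\eqref{eq:exponential_stability} with $c_{\mathrm{exp}}=\sqrt{\bar{c}/\underline{c}}$ and $\gamma_{\mathrm{exp}}=\sqrt{1-\underline{c}/\bar{c}}$.

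I expect the main obstacle to be the decrease-to-$x_{\mathrm{d}}$ conversion of the second paragraph, namely marrying Lemma~\ref{lem:stage_cost_upper_bounds_reference_distance} with~\eqref{eq:stage_cost_lower_and_upper_bound} so that a single stage cost dominates $\vert x \vert_{x_{\mathrm{d}}}^2$ rather than merely $\vert x \vert_{x_{r_N^*}}^2$, together with the care needed to promote the purely local upper bound of Lemma~\ref{lem:feasible_inched_candidate} to a global quadratic bound on the compact region of attraction $\mathcal{X}_N$.
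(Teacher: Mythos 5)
Your proposal is correct and takes essentially the same route as the paper's proof: recursive feasibility and the value-function decrease via the shifted candidate with frozen artificial reference $r_N^*(x(t))$, the lower bound $V_N(x) \ge \tfrac{1}{4}\min\{c_1^{\ell}, c_{\mathrm{d}}^{\ell}\}\vert x \vert_{x_{\mathrm{d}}}^2$ obtained by combining~\eqref{eq:stage_cost_lower_and_upper_bound} with Lemma~\ref{lem:stage_cost_upper_bounds_reference_distance} and the triangle inequality, and the quadratic upper bound from Lemma~\ref{lem:feasible_inched_candidate} with $\hat{r}=r_{\mathrm{d}}$ extended to all of $\mathcal{X}_N$ by compactness. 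The only difference is presentational: you spell out the compactness extension of the local upper bound and the final Lyapunov-to-exponential-estimate computation, both of which the paper delegates to citations of standard results.
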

\begin{proof}
    First, we show recursive feasibility.
    Let~\eqref{eq:MPC_for_tracking} be feasible at $t \in \mathbb{N}_0$.
    Then, at $t+1$, standard arguments using Assumption~\ref{asm:terminal_ingredients} (cf., e.g.~\cite[Lem.~5.10]{Gruene2017}) yield a feasible candidate $(\tilde{u}(\cdot \vert t+1), \tilde{r}(t+1))$ with $\tilde{r}(t+1) = r_N^*(x(t))$ and $\tilde{u}(\cdot \vert t+1) = (u_N^*(1 \vert t), \dots, u_N^*(N-1 \vert t), k_{\mathrm{f}}(x_{u_N^*(\cdot \vert t)}(N), r_N^*(x(t))))$. Feasibility of~\eqref{eq:MPC_for_tracking} for any $t \in \mathbb{N}_0$ follows by induction.

    Second, we show an upper bound of the value function.
    Let $x \in X$ such that $c_2^{\ell} \vert x \vert_{x_\mathrm{d}}^2 \le \varepsilon$.
    Then, from~\eqref{eq:stage_cost_lower_and_upper_bound}, $\bar{\ell}(x, r_{\mathrm{d}}) \le \varepsilon$ and hence, from Lemma~\ref{lem:feasible_inched_candidate}, $x \in \mathcal{X}_N$ and
    $
        V_N(x) 
        \le J_N^{\mathrm{tc}}(x, u_N^*(\cdot \vert t), r_{\mathrm{d}}) 
        \le c_{\mathrm{f}} \bar{\ell}(x, r_{\mathrm{d}})
        \le c_{\mathrm{f}}c_2^{\ell} \vert x \vert_{x_{\mathrm{d}}}^2,
    $
    where the second inequality follows from \eqref{eq:tracking_value_function_upper_bound} and the third from \eqref{eq:stage_cost_lower_and_upper_bound}.
    This local upper bound and compact constraints imply that for all $N\in\mathbb{N}_0$ there exists $a_{\mathrm{up}} > 0$ such that 
    \begin{equation}\label{eq:value_function_upper_bound}
        V_N(x) \le a_{\mathrm{up}}\vert x \vert_{x_{\mathrm{d}}}^2
    \end{equation}
    for all $x \in \mathcal{X}_N$, cf.~\cite[Prop. 2.16]{Rawlings2020}.

    Third, we show a lower bound of the value function. Define $a_{\mathrm{lo}} =  0.25 \min\{c_1^\ell, c_{\mathrm{d}}^\ell \}$, then for all $x\in\mathcal{X}_N$
    \begin{align}\label{eq:value_function_lower_bound}
        &V_N(x) \ge \ell(x, u_N^*(0 \vert t), r_N^*(x))
            \notag
            \\
        &\stackrel{\eqref{eq:stage_cost_lower_and_upper_bound},\eqref{eq:stage_cost_upper_bounds_reference_distance}}{\ge} 
        \frac{c_1^\ell}{2}\vert x \vert_{x_{r_N^*}(x)}^2 
        + \frac{c_{\mathrm{d}}^\ell}{2} \vert x_{r_N^*(x)} \vert_{x_{\mathrm{d}}}^2 \ge a_{\mathrm{lo}}\vert x \vert_{x_{\mathrm{d}}}^2.
    \end{align}

    Fourth, we show a decrease in the value function.
    Consider again $(\tilde{u}(\cdot \vert t+1), \tilde{r}(t+1))$ from above.
    Then,
    \begin{align}
        &V_N(x(t+1)) - V_N(x(t))  
            \notag
            \\
        &\le J_N^{\mathrm{tc}}(x(t+1), \tilde{u}(\cdot \vert t+1), \tilde{r}(t+1)) + \lambda(N)T(\tilde{r}(t+1))
            \notag
            \\
        &\phantom{\le{}} - J_N^{\mathrm{tc}}(x(t), u_N^*(\cdot \vert t), r_N^*(x(t))) - \lambda(N)T(r_N^*(x(t)))
            \notag
            \\
        &= \sum_{k=0}^{N-2} \ell(x_{u_N^*(\cdot \vert t)}(k+1, x(t)), u_N^*(k+1 \vert t), r_N^*(x(t)))
            \notag
            \\
        &\phantom{\le{}} + \ell(x_{u_N^*(\cdot \vert t)}(N, x(t)), k_{\mathrm{f}}(x_{u_N^*(\cdot \vert t)}(N, x(t)), r_N^*(x(t))))
            \notag
            \\
        &\phantom{\le{}} + V^{\mathrm{f}}\left(x_{\tilde{u}(\cdot \vert t+1)}(N, x(t+1)), r_N^*(x(t))\right)
            \notag
            \\
        &\phantom{\le{}} - \sum_{k=0}^{N-1} \ell(x_{u_N^*(\cdot \vert t)}(k, x(t)), u_N^*(k \vert t), r_N^*(x(t)))
            \notag
            \\
        &\phantom{\le{}} - V^{\mathrm{f}}(x_{u_N^*(\cdot \vert t)}(N, x(t)), r_N^*(x(t)))
            \notag    
            \\
        &\stackrel{\eqref{eq:terminal_cost_decrease}}{\le} - \ell({x}(t), u_N^*(0 \vert t), r_N^*(x(t))) \stackrel{\eqref{eq:value_function_lower_bound}}{\le} 
        - a_{\mathrm{lo}}\vert x(t) \vert_{x_{\mathrm{d}}}^2
        \label{eq:value_function_decrease}
    \end{align}
    where the first equality follows from the definition of $(\tilde{u}(\cdot \vert t+1), \tilde{r}(t+1))$.
    Exponential stability then follows from~\eqref{eq:value_function_upper_bound},~\eqref{eq:value_function_lower_bound} and~\eqref{eq:value_function_decrease} using standard arguments (cf.~\cite[Thm.~B.19]{Rawlings2020}).
\end{proof}
\begin{remark}
    As stated in~\cite[Assm. 4]{Limon2018}, \cite[Prop. 4]{Koehler2020b}, if the system is locally uniformly finite time controllable, Assumption~\ref{asm:terminal_ingredients} can be replaced in Theorem~\ref{thm:exponential_stability} by a terminal equality constraint, i.e. $\mathcal{X}^{\mathrm{f}}(r)=\{x_{r}\}$ and $V^{\mathrm{f}}(x, r) \equiv 0$. 
\end{remark}

\section{Transient performance estimate}
In this section, we will derive an estimate of the closed-loop performance.
Appreciate that from~\eqref{eq:value_function_decrease} we obtain
\begin{align}
    &\ell({x}_{\mu_{N}}(k), \mu_{N}({x}_{\mu_{N}}(k)), r_N^*({x}_{\mu_{N}}(k))) 
        \notag    
        \\
    &\le V_N({x}_{\mu_{N}}(k)) - V_N({x}_{\mu_{N}}(k+1)),
    \label{eq:value_function_decrease_recast}
\end{align}
for any $k \in \mathbb{N}_0$.
From this, we will derive a performance bound on the closed loop by using Assumption~\ref{asm:stage_cost_difference_bound_with_linear_term}, and by finding a suitable upper bound on the value function $V_N$.

We will do so by comparing the solution of the MPC for tracking optimisation problem~\eqref{eq:MPC_for_tracking} with the standard MPC optimisation problem's solution with respect to the best reachable reference.
The latter is given by
\begin{equation}\label{eq:standard_MPC}
    W_N(x(t)) = \min_{\mathclap{u_{\mathrm{std}}(\cdot \vert t) \in \mathbb{U}^N_{\mathcal{X}^{\mathrm{f}}(r_{\mathrm{d}})}(x(t))}} 
    J_N^{\mathrm{tc}}(x(t), u_{\mathrm{std}}(\cdot \vert t), r_{\mathrm{d}}).
\end{equation}
We denote the set of states where~\eqref{eq:standard_MPC} is feasible by $\mathcal{X}^{\mathrm{std}}_N$.
Similar to before, the standard MPC scheme is as follows:
At each time step, we measure $x(t)$, solve~\eqref{eq:standard_MPC} and apply the first part of the optimal input, $\nu_{N}(x(t)) = u_{\mathrm{std}}^*(0 \vert t)$.

Note that setting up~\eqref{eq:standard_MPC} requires knowledge of the best reachable reference, which is not the case in~\eqref{eq:MPC_for_tracking}.
Compared to standard MPC (20), MPC for tracking (6) contains $n+m$ additional decision variables and the class of the two optimisation problems is the same, e.g. if (20) is a convex quadratic program, then (6) can be set up to be one as well~\cite{Limon2008}.

\begin{proposition}\label{prop:standard_MPC_stability}
    Let Assumptions~\ref{asm:stage_cost_lower_and_upper_bound} and~\ref{asm:terminal_ingredients} hold.
    Then, $x_{\mathrm{d}}$ is exponentially stable for the closed loop using the standard MPC scheme for all $x\in\mathcal{X}^{\mathrm{std}}_N$,
    i.e. there
    exist $\gamma_{\mathrm{std}} \in (0,1)$ and
    $c_{\mathrm{std}} > 0$ such that for all $t \in \mathbb{N}_0$ we have
    $\vert x_{\nu_N}(t, x) \vert_{x_{\mathrm{d}}} \le c_{\mathrm{std}} \vert x \vert_{x_{{\mathrm{d}}}} \gamma_{\mathrm{std}}^t$. 
    Furthermore, for all $N\in\mathbb{N}_0$ there exists $\alpha_W \in \mathcal{K}_{\infty}$ such that $W_N(x) \le \alpha_W(\vert x \vert_{x_{\mathrm{d}}})$ for all $x\in\mathcal{X}^{\mathrm{std}}_N$.
\end{proposition}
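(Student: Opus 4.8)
The plan is to mirror the proof of Theorem~\ref{thm:exponential_stability}, which simplifies substantially here because the artificial reference is absent: \eqref{eq:standard_MPC} is nothing but nominal MPC stabilising the equilibrium $r_{\mathrm{d}} \in \mathcal{R}$, so the offset-cost machinery (Lemma~\ref{lem:stage_cost_upper_bounds_reference_distance}, Assumptions~\ref{asm:superlinear_scaling} and~\ref{asm:better_candidate_reference}) plays no role and only Assumptions~\ref{asm:stage_cost_lower_and_upper_bound} and~\ref{asm:terminal_ingredients} enter. I would carry out the four standard steps---recursive feasibility, a lower bound on $W_N$, an upper bound on $W_N$, and a decrease of $W_N$ along the closed loop---and then invoke the standard Lyapunov argument.

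First, recursive feasibility: if $u_{\mathrm{std}}^*(\cdot \vert t)$ is optimal at time $t$, then the shifted sequence $(u_{\mathrm{std}}^*(1 \vert t), \dots, u_{\mathrm{std}}^*(N-1 \vert t), k_{\mathrm{f}}(x_{u_{\mathrm{std}}^*(\cdot \vert t)}(N), r_{\mathrm{d}}))$ is feasible at $t+1$ by the terminal invariance and constraint conditions \eqref{eq:terminal_invariance} and \eqref{eq:terminal_constraint_satisfaction}, and feasibility for all $t$ follows by induction. The lower bound is immediate: discarding all but the first stage cost and the non-negative terminal cost gives $W_N(x) \ge \ell(x, \nu_N(x), r_{\mathrm{d}}) \ge \bar{\ell}(x, r_{\mathrm{d}}) \ge c_1^{\ell} \vert x \vert_{x_{\mathrm{d}}}^2$ by \eqref{eq:stage_cost_lower_and_upper_bound}.

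Next, the upper bound, which also yields the claimed $\alpha_W$. For $x$ with $c_2^{\ell} \vert x \vert_{x_{\mathrm{d}}}^2 \le \varepsilon$ we have $\bar{\ell}(x, r_{\mathrm{d}}) \le \varepsilon$ by \eqref{eq:stage_cost_lower_and_upper_bound}, so Lemma~\ref{lem:feasible_inched_candidate} applies with $\hat{r} = r_{\mathrm{d}}$ and gives $W_N(x) \le J_N^{\mathrm{tc}}(x, \hat{u}, r_{\mathrm{d}}) \le c_{\mathrm{f}} \bar{\ell}(x, r_{\mathrm{d}}) \le c_{\mathrm{f}} c_2^{\ell} \vert x \vert_{x_{\mathrm{d}}}^2$. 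Exactly as in Theorem~\ref{thm:exponential_stability}, this local quadratic bound combined with compactness of $\mathcal{Z}$ and $X$ extends to a global quadratic bound $W_N(x) \le a_{\mathrm{up}}^W \vert x \vert_{x_{\mathrm{d}}}^2$ on $\mathcal{X}_N^{\mathrm{std}}$ (cf.~\cite[Prop. 2.16]{Rawlings2020}), from which $\alpha_W(s) = a_{\mathrm{up}}^W s^2 \in \mathcal{K}_{\infty}$.

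Finally, evaluating $J_N^{\mathrm{tc}}$ at the shifted candidate and using the terminal decrease \eqref{eq:terminal_cost_decrease} telescopes the sum to $W_N(x(t+1)) - W_N(x(t)) \le -\ell(x(t), \nu_N(x(t)), r_{\mathrm{d}}) \le -c_1^{\ell} \vert x(t) \vert_{x_{\mathrm{d}}}^2$. Combining the quadratic lower bound, the quadratic upper bound and this decrease, exponential stability with region of attraction $\mathcal{X}_N^{\mathrm{std}}$ follows from the standard argument (cf.~\cite[Thm. B.19]{Rawlings2020}). I expect no genuine obstacle here: each ingredient is a direct specialisation of the Theorem~\ref{thm:exponential_stability} argument to the fixed reference $r_{\mathrm{d}}$, and the only mildly non-routine point is the passage from the local to the global quadratic upper bound, which rests on compactness of the constraint sets and is handled exactly as before.
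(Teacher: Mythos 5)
Your proof is correct and takes exactly the route the paper intends: the paper omits the proof as standard, citing~\cite{Rawlings2020,Gruene2017}, and your four steps (recursive feasibility via the shifted candidate, the quadratic lower bound from Assumption~\ref{asm:stage_cost_lower_and_upper_bound}, the local-to-global quadratic upper bound via Lemma~\ref{lem:feasible_inched_candidate} and compactness, and the terminal-cost decrease feeding the standard Lyapunov argument) are precisely that standard argument, specialised from Theorem~\ref{thm:exponential_stability} to the fixed reference $r_{\mathrm{d}}$. No gaps.
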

We omit the proof, which is standard (see e.g.~\cite{Rawlings2020,Gruene2017}).

The following lemma states that the value function of a standard MPC optimisation problem~\eqref{eq:standard_MPC} is an upper bound of the value function of MPC for tracking~\eqref{eq:MPC_for_tracking}, if the horizon is chosen sufficiently long. 
As part of the proof, we show that the terminal set of the best reachable reference can be reached in uniform finite time from any other terminal set.
\begin{lemma}\label{lem:standard_MPC_upper_bounds_value_function}
    Let Assumptions~\ref{asm:offset_cost_indication}--\ref{asm:terminal_ingredients} and \ref{asm:better_candidate_reference} hold.
    Then, for all $\widetilde{N} \in \mathbb{N}_0 $ with $\mathcal{X}_{\widetilde{N}} \neq \emptyset$, there exists $\bar{N} \in \mathbb{N}_0$ such that for all $x \in \mathcal{X}_{\widetilde{N}}$ and $N \ge \bar{N}$, the problem~\eqref{eq:standard_MPC} has a solution and
    \begin{equation}\label{eq:standard_MPC_upper_bounds_MPC_for_tracking}
        V_N(x) \le W_{N}(x) \le W_{\bar{N}}(x).
    \end{equation}
\end{lemma}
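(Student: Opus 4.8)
My plan is to separate the claim into three parts: solvability of the standard problem~\eqref{eq:standard_MPC} for $N \ge \bar N$, the bound $V_N(x) \le W_N(x)$, and the monotonicity $W_N(x) \le W_{\bar N}(x)$; the latter two are essentially free once solvability is in hand. For $V_N(x) \le W_N(x)$, observe that~\eqref{eq:standard_MPC} is exactly~\eqref{eq:MPC_for_tracking} with the artificial reference frozen at $r(x) = r_{\mathrm{d}}$. Since $T(r_{\mathrm{d}}) = 0$, a minimiser $u_{\mathrm{std}}^*(\cdot \vert t)$ of~\eqref{eq:standard_MPC} renders $(u_{\mathrm{std}}^*(\cdot \vert t), r_{\mathrm{d}})$ feasible for~\eqref{eq:MPC_for_tracking} at cost $W_N(x) + \lambda(N) T(r_{\mathrm{d}}) = W_N(x)$, so optimality gives $V_N(x) \le W_N(x)$. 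For the monotonicity I would use the standard horizon argument for terminal-ingredient MPC: the optimal horizon-$N$ solution of~\eqref{eq:standard_MPC} ends in $\mathcal{X}^{\mathrm{f}}(r_{\mathrm{d}})$, so appending one step of $k_{\mathrm{f}}(\cdot, r_{\mathrm{d}})$ yields a feasible horizon-$(N+1)$ input whose cost, by~\eqref{eq:terminal_cost_decrease}, is at most $W_N(x)$; hence $W_{N+1}(x) \le W_N(x)$, and iterating downward from $\bar N$ gives $W_N(x) \le W_{\bar N}(x)$. Everything therefore reduces to driving $x$ into $\mathcal{X}^{\mathrm{f}}(r_{\mathrm{d}})$ admissibly within a horizon uniform over $x \in \mathcal{X}_{\widetilde N}$.

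The crux is a uniform finite-time reachability statement: there is $N_{\mathrm{r}} \in \mathbb{N}_0$ such that for any $\hat r \in \mathcal{R}$ and any $\hat x \in \mathcal{X}^{\mathrm{f}}(\hat r)$, some admissible input of length at most $N_{\mathrm{r}}$ steers $\hat x$ into $\mathcal{X}^{\mathrm{f}}(r_{\mathrm{d}})$. Granting this, I build the candidate for~\eqref{eq:standard_MPC} in three pieces. Since $x \in \mathcal{X}_{\widetilde N}$ (a $\lambda$-independent condition, as $\lambda$ enters only the objective), a feasible solution of~\eqref{eq:MPC_for_tracking} steers $x$ into $\mathcal{X}^{\mathrm{f}}(r_{\widetilde N}^*(x))$ in $\widetilde N$ admissible steps; the reachability statement then steers the state into $\mathcal{X}^{\mathrm{f}}(r_{\mathrm{d}})$ in at most $N_{\mathrm{r}}$ steps; finally $k_{\mathrm{f}}(\cdot, r_{\mathrm{d}})$ is applied for the remaining steps, which by~\eqref{eq:terminal_invariance} and~\eqref{eq:terminal_constraint_satisfaction} stays admissible in $\mathcal{X}^{\mathrm{f}}(r_{\mathrm{d}})$ and meets the terminal constraint $x_u(N, x) \in \mathcal{X}^{\mathrm{f}}(r_{\mathrm{d}})$. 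This concatenated input lies in $\mathbb{U}^N_{\mathcal{X}^{\mathrm{f}}(r_{\mathrm{d}})}(x)$ as soon as $N \ge \bar N := \widetilde N + N_{\mathrm{r}}$, so~\eqref{eq:standard_MPC} is feasible and, by continuity of $\ell, V^{\mathrm{f}}$ and compactness of $U$, solvable.

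To prove the reachability statement I alternate two moves. From $\hat x \in \mathcal{X}^{\mathrm{f}}(\hat r)$, applying $k_{\mathrm{f}}(\cdot, \hat r)$ contracts $V^{\mathrm{f}}(\cdot, \hat r)$ geometrically: combining~\eqref{eq:terminal_cost_decrease}, the lower bound $\ell \ge \bar{\ell} \ge c_1^{\ell} \vert \cdot \vert_{x_{\hat r}}^2$, and~\eqref{eq:terminal_cost_upper_bound} with~\eqref{eq:stage_cost_lower_and_upper_bound} gives $V^{\mathrm{f}}(x^{+}, \hat r) \le (1 - c_1^{\ell}/(c_{\mathrm{f}} c_2^{\ell})) V^{\mathrm{f}}(x, \hat r)$, so after a number $p$ of steps that is uniform (because $X$ is bounded) one reaches $\vert x \vert_{x_{\hat r}} \le c_{\mathrm{b}}/2$. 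Then Assumption~\ref{asm:better_candidate_reference}, for a fixed small $\theta$, provides $\hat r'$ with $\vert \hat r' \vert_{\hat r} \le c_1^{\mathrm{r}} \theta \vert \hat r \vert_{r_{\mathrm{d}}} \le c_1^{\mathrm{r}} \theta \delta$ and $T(\hat r') - T(\hat r) \le -c_2^{\mathrm{r}} \theta \vert \hat r \vert_{r_{\mathrm{d}}}^2$, where $\delta = \sup_{r \in \mathcal{R}} \vert r \vert_{r_{\mathrm{d}}} < \infty$; choosing $\theta$ so that $c_1^{\mathrm{r}} \theta \delta \le c_{\mathrm{b}}/2$ yields $\vert x \vert_{x_{\hat r'}} \le c_{\mathrm{b}}$, hence $x \in \mathcal{B}_{c_{\mathrm{b}}}(x_{\hat r'}) \subseteq \mathcal{X}^{\mathrm{f}}(\hat r')$ by~\eqref{eq:non-trivial_terminal_set}, and the trajectory continues from the same state under the improved reference $\hat r'$.

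It remains to bound the number of alternations uniformly, which is the main obstacle. As long as $\mathcal{X}^{\mathrm{f}}(r_{\mathrm{d}})$ is not yet reached one has $\vert \hat r \vert_{r_{\mathrm{d}}} > c_{\mathrm{b}}/2$, so each alternation lowers $T$ by at least $c_2^{\mathrm{r}} \theta (c_{\mathrm{b}}/2)^2 > 0$; since $0 \le T(\hat r) \le \alpha_{\mathrm{up}}^T(\delta)$ throughout by Assumption~\ref{asm:offset_cost_indication}, at most $M := \lceil \alpha_{\mathrm{up}}^T(\delta) / (c_2^{\mathrm{r}} \theta (c_{\mathrm{b}}/2)^2) \rceil$ alternations can occur before $\vert \hat r \vert_{r_{\mathrm{d}}} \le c_{\mathrm{b}}/2$. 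A last contraction phase then gives $\vert x \vert_{x_{\hat r}} \le c_{\mathrm{b}}/2$, so $\vert x \vert_{x_{\mathrm{d}}} \le \vert x \vert_{x_{\hat r}} + \vert \hat r \vert_{r_{\mathrm{d}}} \le c_{\mathrm{b}}$ places the state in $\mathcal{B}_{c_{\mathrm{b}}}(x_{\mathrm{d}}) \subseteq \mathcal{X}^{\mathrm{f}}(r_{\mathrm{d}})$, and $N_{\mathrm{r}} = (M+1)p$ works. The whole difficulty lies in keeping $p$, $\theta$, and $M$ independent of $x$ and $\hat r$, which is precisely what compactness of $\mathcal{Z}$ (hence of $X$ and $\mathcal{R}$) together with the uniform constants of Assumptions~\ref{asm:stage_cost_lower_and_upper_bound},~\ref{asm:terminal_ingredients}, and~\ref{asm:better_candidate_reference} secures; assembling the three pieces then yields $V_N(x) \le W_N(x) \le W_{\bar N}(x)$.
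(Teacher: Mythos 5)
Your proposal is correct and takes essentially the same route as the paper's proof: the same geometric contraction of $V^{\mathrm{f}}$ under the terminal controller to reach a $c_{\mathrm{b}}/2$-neighbourhood of the current reference, the same reference-hopping via Assumption~\ref{asm:better_candidate_reference} with the number of hops bounded through the guaranteed decrease of the offset cost $T$ (using Assumption~\ref{asm:offset_cost_indication}), the same concatenation with a $\widetilde{N}$-step feasible input of~\eqref{eq:MPC_for_tracking} to obtain $\bar{N} = \widetilde{N} + N_{\mathrm{r}}$, and the same observation that the standard solution with $T(r_{\mathrm{d}})=0$ is feasible in~\eqref{eq:MPC_for_tracking}, yielding $V_N(x)\le W_N(x)$. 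The only cosmetic difference is that you spell out the monotonicity $W_{N+1}(x)\le W_N(x)$ by appending one step of $k_{\mathrm{f}}(\cdot,r_{\mathrm{d}})$, whereas the paper cites this standard result from the literature.
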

\begin{proof}
    We start by showing that $\mathcal{X}^{\mathrm{f}}(r_{\mathrm{d}})$ is uniformly reachable from any $x \in \cup_{r\in\mathcal{R}}\mathcal{X}^{\mathrm{f}}(r)$.
    Let $r\in\mathcal{R}$ and $x\in\mathcal{X}^{\mathrm{f}}(r)$.
    Consider $u_{\mathrm{f}}^r(k) = k_{\mathrm{f}}(x_{u_{\mathrm{f}}^r}(k, x), r)$.
    From Assumptions~\ref{eq:stage_cost_lower_and_upper_bound} and~\ref{asm:terminal_ingredients}, we get for all $k\in\mathbb{N}_0$,
    $c_1^{\ell} \vert x \vert_{x_r}^2 
    \stackrel{\eqref{eq:stage_cost_lower_and_upper_bound},\eqref{eq:terminal_cost_decrease}}{\le} 
    V^{\mathrm{f}}(x,r) 
    \stackrel{\eqref{eq:terminal_cost_upper_bound}, \eqref{eq:stage_cost_lower_and_upper_bound}}{\le}
    c_{\mathrm{f}} c_2^{\ell} \vert x \vert_{x_r}^2$ and 
    $V^{\mathrm{f}}(x_{u_{\mathrm{f}}^r}(k+1), r) - V^{\mathrm{f}}(x_{u_{\mathrm{f}}^r}(k), r) 
    \stackrel{\eqref{eq:terminal_cost_decrease},\eqref{eq:stage_cost_lower_and_upper_bound}}{\le} -c_1^{\ell} \vert x_{u_{\mathrm{f}}^r}(k) \vert_{x_r}^2$.
    Therefore,
    $
        \vert x_{u_{\mathrm{f}}^r}(k, x) \vert_{x_r}^2 \le \tilde{c}_{\mathrm{f}} \vert x \vert_{x_r}^2 \gamma_{\mathrm{f}}^k \le \tilde{c}_{\mathrm{f}} \delta_x \gamma_{\mathrm{f}}^k
    $
    with $\tilde{c}_{\mathrm{f}} = \frac{c_{\mathrm{f}}c_2^{\ell}}{c_1^{\ell}}$, $\gamma_\mathrm{f} = 1 - \tilde{c}_{\mathrm{f}}^{-1} < 1$ and $\delta_x = \sup_{x\in X} \vert x \vert_{x_r}^2$.
    Hence, there exists a finite $\tau$, independent of $r$, such that $\vert x_{u_{\mathrm{f}}^r}(\tau, x) \vert_{x_r} \le \frac{c_{\mathrm{b}}}{2}$.
    In one case, $\vert x_r \vert_{x_{\mathrm{d}}} \le \vert r \vert_{r_{\mathrm{d}}} \le \frac{c_{\mathrm{b}}}{2}$. 
    Then, $\vert x_{u_{\mathrm{f}}^r}(\tau, x) \vert_{x_{\mathrm{d}}} \le \vert x_{u_{\mathrm{f}}^r}(\tau, x) \vert_{x_{\mathrm{r}}} + \vert x_{\mathrm{r}} \vert_{x_{\mathrm{d}}} \le c_{\mathrm{b}}$ which implies $u_{\mathrm{f}}^r \in \mathbb{U}^{N}_{\mathcal{X}^{\mathrm{f}}(r_{\mathrm{d}})}(x)$ for any $N \ge \tau$.
    In the other case, $\delta_{\mathrm{r}} \ge \vert r \vert_{r_{\mathrm{d}}} > \frac{c_{\mathrm{b}}}{2}$, where $\delta_{\mathrm{r}} = \sup_{r,\hat{r}\in\mathcal{R}} \vert r \vert_{\hat{r}}$. 
    Pick $\theta$ and $\hat{r}$ from Assumption~\ref{asm:better_candidate_reference}.
    Then, from~\eqref{eq:candidate_reference_upper_bound}, $\vert x_{u_{\mathrm{f}}^r}(\tau, x) \vert_{x_{\hat{r}}} \le \vert x_{u_{\mathrm{f}}^r}(\tau, x) \vert_{x_r} + \vert r \vert_{\hat{r}} \le c_{\mathrm{b}}$ if 
    $\theta\le\frac{c_{\mathrm{b}}}{2\delta_{\mathrm{r}} c_1^{\mathrm{r}}}$. 
    Hence, for all $k \ge \tau$, $x_{u_{\mathrm{f}}^r}(k, x) \in \mathcal{X}^{\mathrm{f}}(\hat{r})$.
    From~\eqref{eq:candidate_reference_decrease}, $T(\hat{r}) \le T(r) - c_2^{\mathrm{r}}\theta\vert r\vert_{r_{\mathrm{d}}}^2 < T(r) - c_2^{\mathrm{r}}\theta\frac{c_{\mathrm{b}}^2}{4}$.
    With Assumption~\ref{asm:offset_cost_indication}, by repeatedly applying $u_{\mathrm{f}}^{\hat{r}}$ for each new reference $\hat{r}$,
    we converge into $\mathcal{X}^{\mathrm{f}}(r^\prime)$ with $\vert r^\prime \vert_{r_\mathrm{d}} \le \frac{c_{\mathrm{b}}}{2}$ after $\tau(1 + c_{\tau})$ steps with $c_{\tau} \ge 4\frac{\alpha_{\mathrm{up}}^T(\delta_{\mathrm{r}}) - {\alpha_{\mathrm{lo}}^T}^{-1}(\frac{c_{\mathrm{b}}}{2})}{c_2^{\mathrm{r}}\theta c_{\mathrm{b}}^2}$. 
    Then, $\tau$ steps of $u_{\mathrm{f}}^{r^\prime}$ bring us into $\mathcal{X}^{\mathrm{f}}(r_{\mathrm{d}})$, where the input can be feasibly extended to an arbitrary length by Assumption~\ref{asm:terminal_ingredients}.
    In summary, for any $r\in\mathcal{R}$ and $x\in\mathcal{X}^{\mathrm{f}}(r)$, $\mathbb{U}^N_{\mathcal{X}^{\mathrm{f}}(r_{\mathrm{d}})}(x) \neq \emptyset$ for all $N \ge \tau(2 + c_{\tau})$.
    
    Finally, since $x \in \mathcal{X}_{\widetilde{N}}$, there exists $\tilde{r} \in \mathcal{R}$ and $\tilde{u} \in \mathbb{U}_{\mathcal{X}^{\mathrm{f}}(\tilde{r})}^{\widetilde{N}}(x)$ such that $x_{\tilde{u}}(\widetilde{N},x) \in \mathcal{X}^{\mathrm{f}}(\tilde{r})$.
    Hence, as shown above, there exists $u_{\tau} \in \mathbb{U}_{\mathcal{X}^{\mathrm{f}}(r_{\mathrm{d}})}^{N_{\tau}}(x_{\tilde{u}}(\widetilde{N},x))$ with $N_{\tau} = \tau(2 + c_{\tau})$.
    Thus, $u_N = (\tilde{u}, u_{\tau}) \in \mathbb{U}_{\mathcal{X}^{\mathrm{f}}(r_{\mathrm{d}})}^{N}(x)$, for any $N \ge \bar{N} = \widetilde{N} + N_{\tau}$, is a feasible candidate in~\eqref{eq:standard_MPC}.
    This in turn implies that there exists $(u_{\mathrm{std}}^*, r_{\mathrm{d}})$ for all $N \ge \bar{N}$ and $x \in \mathcal{X}_{\widetilde{N}}$, and it is a feasible solution in~\eqref{eq:MPC_for_tracking} with $T(r_{{\mathrm{d}}}) = 0$.
    Hence,
    $
    V_N(x) \le J_N^{\mathrm{tc}}(x, u_{\mathrm{std}}^*, r_{\mathrm{d}}) + \lambda(N)T(r_{{\mathrm{d}}}) = W_{N}(x)
    $
    which is the first inequality of the claim.
    The second is a standard result with $N \ge \bar{N}$, see e.g.~\cite[Lem.~5.12]{Gruene2017}.
\end{proof}

This makes it possible to use well established results on performance of MPC~{\cite[Thm. 8.22]{Gruene2017}}.
\begin{proposition}\label{prop:standard_MPC_value_function_performance_bound}
    Let Assumptions~\ref{asm:stage_cost_lower_and_upper_bound} and~\ref{asm:terminal_ingredients} hold.
    Then, for all $N\in\mathbb{N}_0$ there exist $\delta_1, \delta_2 \in \mathcal{L}$ such that for all $x \in \mathcal{X}_N$
    \begin{equation}\label{eq:standard_MPC_value_function_performance_bound}
        W_N(x) \le \inf_{u\in\mathbb{U}^K_{\mathcal{B}_{\kappa}(x_{{\mathrm{d}}})}(x)} J_K^{\mathrm{d}}(x,u) + \delta_1(N) + \delta_2(K) 
    \end{equation}
    with $\kappa = c_{\mathrm{std}}\vert x \vert_{x_{\mathrm{d}}} \gamma_{\mathrm{std}}^K$ and $c_{\mathrm{std}}, \gamma_{\mathrm{std}}$ from Proposition~\ref{prop:standard_MPC_stability}.
\end{proposition}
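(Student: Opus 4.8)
The plan is to route the estimate through the infinite-horizon optimal value $V_\infty(x) = \inf_{u \in \mathbb{U}^\infty_{\{x_{\mathrm{d}}\}}(x)} \sum_{k=0}^\infty \ell(x_u(k,x), u(k), r_{\mathrm{d}})$, splitting the claimed bound into an $N$-dependent error and a $K$-dependent error handled separately. First I would record that $V_\infty$ is well defined and uniformly bounded on the relevant set: extending any optimiser of~\eqref{eq:standard_MPC} to infinity by $k_{\mathrm{f}}(\cdot, r_{\mathrm{d}})$ and summing~\eqref{eq:terminal_cost_decrease} shows $V_\infty(x) \le W_N(x)$, while Proposition~\ref{prop:standard_MPC_stability} gives $W_N(x) \le \alpha_W(\vert x \vert_{x_{\mathrm{d}}})$; since $X$ is bounded, $V_\infty(x) \le \alpha_W(\sup_{x'\in X}\vert x' \vert_{x_{\mathrm{d}}}) =: \bar{V}$ uniformly. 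The comparison problem is feasible as well, since by Proposition~\ref{prop:standard_MPC_stability} the standard MPC closed loop is admissible and reaches $\mathcal{B}_{\kappa}(x_{\mathrm{d}})$ after exactly $K$ steps, so $\mathbb{U}^K_{\mathcal{B}_{\kappa}(x_{\mathrm{d}})}(x)\neq\emptyset$.

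For the $K$-error I would show $V_\infty(x) \le \inf_{u\in\mathbb{U}^K_{\mathcal{B}_{\kappa}(x_{\mathrm{d}})}(x)} J_K^{\mathrm{d}}(x,u) + \delta_2(K)$. Take a near-optimal $u^K \in \mathbb{U}^K_{\mathcal{B}_{\kappa}(x_{\mathrm{d}})}(x)$ with terminal state $x^K = x_{u^K}(K,x)$, so $\vert x^K \vert_{x_{\mathrm{d}}} \le \kappa$. For $K$ large enough that $\kappa \le c_{\mathrm{b}}$ (uniformly in $x$, since $\kappa \le c_{\mathrm{std}}(\sup_{x'}\vert x' \vert_{x_{\mathrm{d}}})\gamma_{\mathrm{std}}^K$), \eqref{eq:non-trivial_terminal_set} gives $x^K \in \mathcal{X}^{\mathrm{f}}(r_{\mathrm{d}})$, so $u^K$ prolongs to infinity by $k_{\mathrm{f}}(\cdot, r_{\mathrm{d}})$ into an element of $\mathbb{U}^\infty_{\{x_{\mathrm{d}}\}}(x)$; summing~\eqref{eq:terminal_cost_decrease} and using~\eqref{eq:terminal_cost_upper_bound} with~\eqref{eq:stage_cost_lower_and_upper_bound} bounds the tail by $V^{\mathrm{f}}(x^K, r_{\mathrm{d}}) \le c_{\mathrm{f}} c_2^{\ell} \kappa^2$. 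Hence $V_\infty(x) \le J_K^{\mathrm{d}}(x, u^K) + c_{\mathrm{f}} c_2^{\ell}\kappa^2$, and taking $\delta_2(K) = c_{\mathrm{f}} c_2^{\ell} c_{\mathrm{std}}^2 (\sup_{x'}\vert x' \vert_{x_{\mathrm{d}}}^2)\gamma_{\mathrm{std}}^{2K} \in\mathcal{L}$ (padded by a constant for the finitely many small $K$ with $\kappa > c_{\mathrm{b}}$) gives the claim as the optimality gap vanishes.

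For the $N$-error I would invoke the cited result~\cite[Thm. 8.22]{Gruene2017} to obtain $W_N(x) \le V_\infty(x) + \delta_1(N)$ with $\delta_1\in\mathcal{L}$; its hypotheses (exponential controllability to $x_{\mathrm{d}}$ together with a local terminal control Lyapunov function) are exactly what Assumptions~\ref{asm:stage_cost_lower_and_upper_bound} and~\ref{asm:terminal_ingredients} provide, as already used in the geometric decay estimate inside the proof of Lemma~\ref{lem:standard_MPC_upper_bounds_value_function}. This bound can also be made self-contained by a turnpike argument: since $\ell(\cdot,\cdot,r_{\mathrm{d}}) \ge c_1^{\ell}\vert\cdot\vert_{x_{\mathrm{d}}}^2$, any $\varepsilon$-optimiser $u^\infty$ of $V_\infty(x)$ satisfies $(N+1)\min_{k\le N}\vert x_{u^\infty}(k)\vert_{x_{\mathrm{d}}}^2 \le \bar{V}/c_1^{\ell}$, so at some $k^\star \le N$ the trajectory lies within $\rho(N) := (\bar{V}/(c_1^{\ell}(N+1)))^{1/2}$ of $x_{\mathrm{d}}$; following $u^\infty$ up to $k^\star$ and then $k_{\mathrm{f}}(\cdot,r_{\mathrm{d}})$ for the remaining steps is feasible for~\eqref{eq:standard_MPC} once $\rho(N)\le c_{\mathrm{b}}$, with cost at most $V_\infty(x) + \varepsilon + c_{\mathrm{f}} c_2^{\ell}\rho(N)^2$, giving $\delta_1(N) = c_{\mathrm{f}} c_2^{\ell}\bar{V}/(c_1^{\ell}(N+1)) \in \mathcal{L}$. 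Combining the two estimates yields the claim.

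I expect the main obstacle to be making the $N$-error $\delta_1$ uniform in $x$ while reconciling the fixed horizon $N$ of~\eqref{eq:standard_MPC} with the a priori unrelated horizon $K$ of the comparison problem. Both difficulties are resolved by the uniform bound $V_\infty \le \bar{V}$ on the bounded set $X$ and by the freedom to choose the handover time $k^\star \le N$ (respectively, to prolong with the terminal controller), so that no relation between $N$ and $K$ is needed and the two errors decouple additively.
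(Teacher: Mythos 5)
Your proposal is correct and takes essentially the same route as the paper: the paper's proof is a one-line appeal to the proof of \cite[Thm.~8.22]{Gruene2017}, and your decomposition through $V_\infty$ --- bounding $W_N(x) \le V_\infty(x) + \delta_1(N)$ via the turnpike-plus-terminal-cost candidate, and $V_\infty(x) \le \inf_{u} J_K^{\mathrm{d}}(x,u) + \delta_2(K)$ by prolonging a near-optimal $K$-horizon sequence with $k_{\mathrm{f}}(\cdot, r_{\mathrm{d}})$ --- is precisely the argument carried out inside that cited proof. The details you flag (padding $\delta_1,\delta_2$ for the finitely many small $N,K$, uniformity over the bounded set $X$, and nonemptiness of the comparison class via the standard MPC closed loop) are all handled correctly, so nothing essential is missing.
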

\begin{proof}
    All assumptions in~{\cite[Thm. 8.22]{Gruene2017}} are satisfied, and the claim follows from its proof.
\end{proof}

We are now able to state our first main result:
\begin{theorem}\label{thm:transient_performance_bound}
    Let Assumptions~\ref{asm:offset_cost_indication}--\ref{asm:better_candidate_reference} hold.
    Then, for any $\widetilde{N} \in \mathbb{N}_0$, there exist $\delta_1, \delta_2 \in \mathcal{L}$, $\bar{N}\in \mathbb{N}$, such that for all $x \in \mathcal{X}_{\widetilde{N}}$, $N \ge \bar{N}$ and $K\in\mathbb{N}_0$, with $x_{\mu_{N}}(k) = x_{\mu_{N}}(k,x)$,
    \begin{align}\label{eq:transient_performance_bound}
        &J_K^{\mathrm{d}}(x, \mu_N) = \sum_{k=0}^{K-1} \ell(x_{\mu_{N}}(k), \mu_{N}({x}_{\mu_{N}}(k)), r_{{\mathrm{d}}}) 
            \notag
            \\
        &\le \inf_{u\in\mathbb{U}^K_{\mathcal{B}_{\kappa}(x_{{\mathrm{d}}})}(x)} J_K^{\mathrm{d}}(x,u) + \delta_1(N) + \delta_2(K) - V_N({x}_{\mu_{N}}(K)) 
            \notag
            \\
        &\phantom{\le{}}
        + \sum_{k=0}^{K-1} c_5^{\ell} \vert r_N^*(x_{\mu_{N}}(k)) \vert_{r_{{\mathrm{d}}}}^2 + c_6^{\ell} \vert r_N^*(x_{\mu_{N}}(k)) \vert_{r_{{\mathrm{d}}}}
    \end{align}
    with $\kappa = c_{\mathrm{std}}\vert x \vert_{x_{\mathrm{d}}} \gamma_{\mathrm{std}}^K$ and $c_{\mathrm{std}}, \gamma_{\mathrm{std}}$ from Proposition~\ref{prop:standard_MPC_stability}.
\end{theorem}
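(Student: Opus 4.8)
The plan is to bound the closed-loop cost measured against $r_{\mathrm{d}}$ by first rewriting it in terms of the cost measured against the optimal artificial reference $r_N^*$, then telescoping via the value-function decrease, and finally bounding the initial value $V_N(x)$ through Lemma~\ref{lem:standard_MPC_upper_bounds_value_function} and Proposition~\ref{prop:standard_MPC_value_function_performance_bound}.

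First, I would apply Assumption~\ref{asm:stage_cost_difference_bound_with_linear_term} pointwise along the closed loop. For each $k\in\mathbb{I}_{0:K-1}$, choosing $r_1 = r_{\mathrm{d}}$ and $r_2 = r_N^*(x_{\mu_N}(k))$ in~\eqref{eq:stage_cost_difference_bound_with_linear_term} (and using that $\vert\cdot\vert_{\cdot}$ is symmetric) bounds $\ell(x_{\mu_N}(k), \mu_N(x_{\mu_N}(k)), r_{\mathrm{d}})$ by the stage cost evaluated at $r_N^*(x_{\mu_N}(k))$ plus the two offset terms $c_5^{\ell} \vert r_N^*(x_{\mu_N}(k)) \vert_{r_{\mathrm{d}}}^2 + c_6^{\ell} \vert r_N^*(x_{\mu_N}(k)) \vert_{r_{\mathrm{d}}}$. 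Summing over $k$ produces precisely the two extra sums appearing in~\eqref{eq:transient_performance_bound} and leaves the closed-loop cost taken with respect to $r_N^*$.

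Second, I would control that remaining sum using the recast decrease~\eqref{eq:value_function_decrease_recast}: each summand is bounded by $V_N(x_{\mu_N}(k)) - V_N(x_{\mu_N}(k+1))$, so the sum telescopes to $V_N(x) - V_N(x_{\mu_N}(K))$, since $x_{\mu_N}(0) = x$. This accounts for the $-V_N(x_{\mu_N}(K))$ term in the claim and reduces the task to bounding $V_N(x)$ from above.

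Third, for $x\in\mathcal{X}_{\widetilde{N}}$ and $N\ge\bar{N}$, Lemma~\ref{lem:standard_MPC_upper_bounds_value_function} ensures that~\eqref{eq:standard_MPC} is feasible and gives $V_N(x)\le W_N(x)$, while Proposition~\ref{prop:standard_MPC_value_function_performance_bound} bounds $W_N(x)$ by $\inf_{u\in\mathbb{U}^K_{\mathcal{B}_{\kappa}(x_{\mathrm{d}})}(x)} J_K^{\mathrm{d}}(x,u) + \delta_1(N) + \delta_2(K)$. Substituting this chain for $V_N(x)$ yields~\eqref{eq:transient_performance_bound}. Since the argument is essentially a sequence of substitutions, I do not expect a genuine obstacle; the only care needed is in the bookkeeping of which reference each stage cost is evaluated against and in checking the condition $N\ge\bar{N}$, which is exactly what guarantees that $W_N(x)$ is well-defined and lets Proposition~\ref{prop:standard_MPC_value_function_performance_bound} apply.
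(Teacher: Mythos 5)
Your proposal is correct and follows essentially the same route as the paper's proof: apply Assumption~\ref{asm:stage_cost_difference_bound_with_linear_term} pointwise (with the symmetry of $\vert\cdot\vert_{\cdot}$ handling the order of the references), telescope via~\eqref{eq:value_function_decrease_recast} to get $V_N(x) - V_N(x_{\mu_N}(K))$, and then chain Lemma~\ref{lem:standard_MPC_upper_bounds_value_function} with Proposition~\ref{prop:standard_MPC_value_function_performance_bound} to bound $V_N(x)$. The bookkeeping details you flag (which reference each stage cost is evaluated against, and $N \ge \bar{N}$ guaranteeing feasibility of~\eqref{eq:standard_MPC}) are exactly the points the paper's proof relies on.
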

\begin{proof}
    For any $K \in \mathbb{N}_0$, from Assumption~\ref{asm:stage_cost_difference_bound_with_linear_term} and summing up both sides of~\eqref{eq:value_function_decrease_recast}, we get for any $N \ge \widetilde{N}$
    $
        J_K^{\mathrm{d}}(x, \mu_N) = \sum_{k=0}^{K-1} \ell(x_{\mu_{N}}(k), \mu_{N}({x}_{\mu_{N}}(k)), r_{{\mathrm{d}}})\stackrel{\eqref{eq:stage_cost_difference_bound_with_linear_term}}{\le}
        \sum_{k=0}^{K-1} \ell({x}_{\mu_{N}}(k), \mu_{N}({x}_{\mu_{N}}(k)), r_N^*(x_{\mu_{N}}(k)))
        + \sum_{k=0}^{K-1} c_5^{\ell} \vert r_N^*(x_{\mu_{N}}(k)) \vert_{r_{{\mathrm{d}}}}^2 + c_6^{\ell} \vert r_N^*(x_{\mu_{N}}(k)) \vert_{r_{{\mathrm{d}}}}
        \stackrel{\eqref{eq:value_function_decrease_recast}}{\le} 
        V_N(x) - V_N({x}_{\mu_{N}}(K))
        + \sum_{k=0}^{K-1} c_5^{\ell} \vert r_N^*(x_{\mu_{N}}(k)) \vert_{r_{{\mathrm{d}}}}^2 + c_6^{\ell} \vert r_N^*(x_{\mu_{N}}(k)) \vert_{r_{{\mathrm{d}}}}.
    $
    With Lemma~\ref{lem:standard_MPC_upper_bounds_value_function} and Proposition~\ref{prop:standard_MPC_value_function_performance_bound}, the claim follows.
\end{proof}

Compared to the transient performance bound~\eqref{eq:standard_MPC_value_function_performance_bound} for standard MPC (see~\cite[Chapter 8.4]{Gruene2017} for a discussion of~\eqref{eq:standard_MPC_value_function_performance_bound}, $\delta_1$ and $\delta_2$),~\eqref{eq:transient_performance_bound} contains an additional error term, which depends on the structure of the stage cost ($c_5^{\ell}$, $c_6^{\ell}$) and on the shape of the set of equilibria ($\vert r_N^* \vert_{r_{{\mathrm{d}}}}$).
Appreciate that~\eqref{eq:transient_performance_bound} is an \emph{upper bound} and thus does not imply that, for a fixed horizon, a larger offset cost necessarily improves the closed-loop performance, although the bound could become smaller.

\section{Asymptotic performance estimate}
Next, we examine whether the error terms in~\eqref{eq:transient_performance_bound} stay bounded if $K\to\infty$ (the number of compared closed-loop time steps), and vanish if $N\to\infty$ (the prediction horizon).
The former follows from exponential stability.
\begin{lemma}\label{lem:convergence_of_series}
    Suppose Assumptions~\ref{asm:stage_cost_lower_and_upper_bound},~\ref{asm:stage_cost_difference_bound},~\ref{asm:terminal_ingredients},~\ref{asm:superlinear_scaling} and~\ref{asm:better_candidate_reference} hold.
    Then, for all $N\in\mathbb{N}_0$, there exists $\rho_{N} \ge 0$ such that for all $x \in \mathcal{X}_N$, with $r_N^*(k) = r_N^*(x_{\mu_{N}}(k,x))$,
    \begin{equation}\label{eq:convergence_artificial_reference_series}
        \sum_{k=0}^{\infty} c_5^{\ell} \vert r_N^*(k) \vert_{r_{\mathrm{d}}}^2 + c_6^{\ell} \vert r_N^*(k) \vert_{r_{\mathrm{d}}} \le \rho_{N} < \infty.
    \end{equation}
\end{lemma}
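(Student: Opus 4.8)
The plan is to establish that the closed-loop sequence $|r_N^*(k)|_{r_{\mathrm{d}}}$ decays geometrically in $k$, after which both series in \eqref{eq:convergence_artificial_reference_series} are dominated by convergent geometric series. The crucial preliminary step is a pointwise estimate bounding the artificial reference offset by the state distance, $|r_N^*(x)|_{r_{\mathrm{d}}} \le C\,|x|_{x_{\mathrm{d}}}$, valid on all of $\mathcal{X}_N$.

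To obtain this estimate, I would combine two facts. First, $V_N(x)$ is bounded below by its leading stage cost, $V_N(x) \ge \ell(x, \mu_N(x), r_N^*(x))$, since every remaining term in the objective \eqref{eq:MPC_for_tracking}---the later stage costs, the terminal cost and the nonnegative offset term $\lambda(N) T(r_N^*(x))$---is nonnegative. Second, Lemma~\ref{lem:stage_cost_upper_bounds_reference_distance} lower-bounds this stage cost by $c_{\mathrm{d}}^{\ell} |r_N^*(x)|_{r_{\mathrm{d}}}^2$. Chaining these with the value-function upper bound \eqref{eq:value_function_upper_bound} gives $c_{\mathrm{d}}^{\ell} |r_N^*(x)|_{r_{\mathrm{d}}}^2 \le V_N(x) \le a_{\mathrm{up}}|x|_{x_{\mathrm{d}}}^2$, so I may take $C = \sqrt{a_{\mathrm{up}}/c_{\mathrm{d}}^{\ell}}$.

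With the pointwise bound in hand, I would invoke the exponential stability estimate \eqref{eq:exponential_stability} from Theorem~\ref{thm:exponential_stability} along the closed loop, yielding $|r_N^*(k)|_{r_{\mathrm{d}}} \le C\,c_{\mathrm{exp}} |x|_{x_{\mathrm{d}}}\, \gamma_{\mathrm{exp}}^k$. Summing the two geometric series $\sum_k \gamma_{\mathrm{exp}}^{2k} = (1-\gamma_{\mathrm{exp}}^2)^{-1}$ and $\sum_k \gamma_{\mathrm{exp}}^k = (1-\gamma_{\mathrm{exp}})^{-1}$, and finally bounding $|x|_{x_{\mathrm{d}}}$ by $\sup_{x\in X}|x|_{x_{\mathrm{d}}} < \infty$ (finite because $X$ is bounded), produces an $N$-dependent constant $\rho_N$ uniform over $x \in \mathcal{X}_N$.

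The main obstacle to watch out for is resisting the tempting but insufficient shortcut: summing the decrease inequality \eqref{eq:value_function_decrease_recast} directly and applying Lemma~\ref{lem:stage_cost_upper_bounds_reference_distance} only proves $\sum_k |r_N^*(k)|_{r_{\mathrm{d}}}^2 \le V_N(x)/c_{\mathrm{d}}^{\ell} < \infty$, i.e.\ square-summability. This controls the quadratic term $c_5^{\ell}|r_N^*(k)|_{r_{\mathrm{d}}}^2$ but not the linear term $c_6^{\ell}|r_N^*(k)|_{r_{\mathrm{d}}}$, since square-summable sequences need not be summable. Exponential (rather than merely asymptotic) stability is exactly what upgrades the decay from square-summable to geometric and thereby handles the linear term.
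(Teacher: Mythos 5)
Your proposal is correct and follows essentially the same route as the paper's proof: the chain $c_{\mathrm{d}}^{\ell}\vert r_N^*(k)\vert_{r_{\mathrm{d}}}^2 \le \ell(x(k),\mu_N(x(k)),r_N^*(k)) \le V_N(x(k)) \le a_{\mathrm{up}}\vert x(k)\vert_{x_{\mathrm{d}}}^2 \le a_{\mathrm{up}}c_{\mathrm{exp}}^2\vert x\vert_{x_{\mathrm{d}}}^2\gamma_{\mathrm{exp}}^{2k}$, followed by geometric-series summation and a uniform bound via boundedness of $X$, is exactly the paper's argument (your constant $C\,c_{\mathrm{exp}}\vert x\vert_{x_{\mathrm{d}}}$ is the paper's $\sqrt{\tilde{c}_{\mathrm{exp}}}$). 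Your closing remark correctly identifies why mere square-summability from summing \eqref{eq:value_function_decrease_recast} would not suffice for the linear term, which is indeed the reason the exponential stability estimate is needed.
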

\begin{proof}
    Let $N\in\mathbb{N}_0$ and $x \in \mathcal{X}_N$.
    Then, for all $k\in\mathbb{N}_0$, abbreviating $r_N^*(k) = r_N^*(x_{\mu_{N}}(k,x))$ and $x(k) = x_{\mu_{N}}(k,x)$
    we have
    $
    c_{\mathrm{d}}^{\ell} \vert r_N^*(k) \vert_{r_{\mathrm{d}}}^2 
        \stackrel{\eqref{eq:stage_cost_upper_bounds_reference_distance}}{\le}
        \ell(x(k), \mu_N(x(k)), r_N^*(k)) 
        \le 
        V_N(x(k))
        \stackrel{\eqref{eq:value_function_upper_bound}}{\le} 
        a_{\mathrm{up}} \vert x(k) \vert_{x_{\mathrm{d}}}^2
        \stackrel{\eqref{eq:exponential_stability}}{\le} 
        a_{\mathrm{up}} c_{\mathrm{exp}}^2 \vert x \vert_{x_{\mathrm{d}}}^2 \gamma_{\mathrm{exp}}^{2k}.
    $
    Therefore, $c_5^{\ell} \vert r_N^*(k) \vert_{r_{\mathrm{d}} }^2 \le  c_5^{\ell} \tilde{c}_{\mathrm{exp}} \gamma_{\mathrm{exp}}^{2k}$ with $\tilde{c}_{\mathrm{exp}} = (c_{\mathrm{d}}^{\ell})^{-1}a_{\mathrm{up}} c_{\mathrm{exp}}^2 \vert x \vert_{x_{\mathrm{d}}}^2$. 
    Since $\gamma_{\mathrm{exp}} \in (0,1)$, $\sum_{k=0}^{\infty} c_5^{\ell} \tilde{c}_{\mathrm{exp}} \gamma_{\mathrm{exp}}^{2k} = \frac{c_5^{\ell} \tilde{c}_{\mathrm{exp}} }{1 - \gamma_{\mathrm{exp}}^{2}}$.
    Additionally, $c_6^{\ell} \vert r_N^*(k) \vert_{r_{\mathrm{d}} } \le  c_6^{\ell} \sqrt{\tilde{c}_{\mathrm{exp}}} \gamma_{\mathrm{exp}}^{k}$ and
    $\sum_{k=0}^{\infty} c_6^{\ell} \sqrt{\tilde{c}_{\mathrm{exp}}} \gamma_{\mathrm{exp}}^{k} = \frac{c_6^{\ell} \sqrt{\tilde{c}_{\mathrm{exp}}}}{1 - \gamma_{\mathrm{exp}}}$.
    The claim follows from the comparison test, $\mathcal{X}_N \subseteq X$ for all $N\in\mathbb{N}_0$ and boundedness of $X$.
\end{proof}

The following shows that scaling the offset cost with the horizon, i.e. keeping it in relation to the $N$ terms of the stage cost, is pivotal for the error terms in~\eqref{eq:transient_performance_bound} to vanish if the horizon grows to infinity. 
These depend on the optimal open-loop artificial reference in each time step.
\begin{example}
    This example shows that the artificial reference does not, in general, converge in \emph{open-loop} to the best reachable one for arbitrarily large horizons, if $\lambda(N)$ is chosen constant, e.g. $\lambda(N) \equiv 1$.
    Consider the task of steering the scalar system
    $
        x(t+1) = x(t) + u(t), \quad x(0) = x_0,
    $
    to the steady state $x_{\mathrm{d}} = 0$ given any initial condition $x_0 \in \mathbb{R}$.
    State and input constraints are set to $(x(t), u(t)) \in [-2x_0, 2x_0] \times [-1, 1]$.
    The stage cost is chosen as $\ell(x,u,r) = (x-x_r)^2 + (u-u_r)^2$ and the offset cost as $T(x_r) = (x_r - x_{\mathrm{d}})^2 = x_r^2$.
    Note that $u_r = 0$.
    Then, the MPC for tracking optimisation problem with (for simplicity) a terminal equality constraint and $\lambda(N) \equiv 1$ is
    $
        \min_{
            u, r
            } 
            \sum_{k=0}^{N-1} (x_u(k) - x_r)^2 + u(k)^2 + x_r^2
    $ subject to $u \in \mathbb{U}^N_{\{x_r\}}(x_0)$ and $r \in \mathcal{R}$. 
    The cost of staying at the initial state, i.e. with $u(k) = 0$, $x_u(k) = x_0$ for all $k$, and $x_r = x_0$, is $x_0^2$.
    Next, consider any steady state between $\frac{x_0}{2}$ and $x_{\mathrm{d}} = 0$, i.e. $x_{\mathrm{h}} = \vartheta\frac{x_0}{2}$ with $\vartheta \in [0, 1]$.
    Clearly, there exists $u_{\mathrm{h}} \in \mathbb{U}_{\{x_{\mathrm{h}}\}}^N$ for any $N$ sufficiently large.
    The cost of $(u_{\mathrm{h}}, x_{\mathrm{h}})$ then satisfies
    $
        \sum_{k=0}^{N-1} (x_{u_{\mathrm{h}}}(k) - x_{\mathrm{h}})^2 + u_{\mathrm{h}}(k)^2 + x_{\mathrm{h}}^2 > (x_0 - x_{\mathrm{h}})^2 + x_{\mathrm{h}}^2
        \ge x_0^2 - 2x_0x_{\mathrm{h}} + 2x_{\mathrm{h}}^2 = x_0^2 - \vartheta x_0^2 + \frac{\vartheta^2}{4}x_0^2 
        \ge x_0^2
    $
    for all $\vartheta \in [0, 1]$.
    Therefore, choosing $x_r = x_0$ is always better than $x_r = x_{\mathrm{h}}$ for arbitrary horizons.
\end{example}

In comparison, a scaled offset cost leads to uniform convergence on each fixed set of feasible states.
\begin{lemma}\label{lem:convergence_artificial_reference}
    Let Assumptions~\ref{asm:offset_cost_indication}--\ref{asm:better_candidate_reference} hold and $M \in \mathbb{N}_0$.
    Then, $\lim_{N\to\infty} \vert r_N^*(x) \vert_{r_{\mathrm{d}}} = 0$ uniformly on $\mathcal{X}_{M}$.
\end{lemma}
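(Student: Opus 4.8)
The plan is to squeeze the value function $V_N$ between two bounds: from below it dominates $\lambda(N)\,T(r_N^*(x))$ because the tracking part $J_N^{\mathrm{tc}}$ is nonnegative, while from above it is bounded \emph{uniformly in $N$} by the standard-MPC value function $W_{\bar N}$ furnished by Lemma~\ref{lem:standard_MPC_upper_bounds_value_function}. Dividing by $\lambda(N)$ and inverting the lower offset-cost bound of Assumption~\ref{asm:offset_cost_indication} then yields an explicit estimate of $\vert r_N^*(x)\vert_{r_{\mathrm{d}}}$ that is independent of $x$ and vanishes as $N\to\infty$, since $\lambda(N)\ge N\to\infty$ by Assumption~\ref{asm:superlinear_scaling}.

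Concretely, I would first dispose of the trivial case $\mathcal{X}_M=\emptyset$, where the statement holds vacuously. Otherwise, applying Lemma~\ref{lem:standard_MPC_upper_bounds_value_function} with $\widetilde N = M$ produces a horizon $\bar N$, depending only on $M$, such that for every $x\in\mathcal{X}_M$ and every $N\ge\bar N$ the standard problem~\eqref{eq:standard_MPC} is feasible and $V_N(x)\le W_{\bar N}(x)$. Since $\ell\ge 0$ and $V^{\mathrm{f}}\ge 0$, the definition of $V_N$ in~\eqref{eq:MPC_for_tracking} gives $V_N(x)\ge \lambda(N)\,T(r_N^*(x))$, so that $\lambda(N)\,T(r_N^*(x))\le W_{\bar N}(x)$ for all such $x$ and $N$.

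Next I would render the right-hand side uniform in $x$. As $x\in\mathcal{X}_M$ implies feasibility of~\eqref{eq:standard_MPC} at horizon $\bar N$, Proposition~\ref{prop:standard_MPC_stability} applied with $N=\bar N$ yields an $\alpha_W\in\mathcal{K}_{\infty}$, depending only on $\bar N$ and hence on $M$, with $W_{\bar N}(x)\le\alpha_W(\vert x\vert_{x_{\mathrm{d}}})$. Since $X$ is bounded, setting $\delta_X = \sup_{x\in X}\vert x\vert_{x_{\mathrm{d}}}<\infty$ gives $\alpha_W(\vert x\vert_{x_{\mathrm{d}}})\le\alpha_W(\delta_X)$ for all $x\in\mathcal{X}_M\subseteq X$. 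Invoking the lower bound $\alpha_{\mathrm{lo}}^T(\vert r_N^*(x)\vert_{r_{\mathrm{d}}})\le T(r_N^*(x))$ from Assumption~\ref{asm:offset_cost_indication} and chaining the inequalities, I obtain, for all $x\in\mathcal{X}_M$ and $N\ge\bar N$,
\begin{equation*}
    \vert r_N^*(x)\vert_{r_{\mathrm{d}}}\le(\alpha_{\mathrm{lo}}^T)^{-1}\!\left(\frac{\alpha_W(\delta_X)}{\lambda(N)}\right),
\end{equation*}
whose right-hand side no longer depends on $x$, which is precisely uniformity on $\mathcal{X}_M$.

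Finally, letting $N\to\infty$, Assumption~\ref{asm:superlinear_scaling} gives $\lambda(N)\ge N\to\infty$, so the argument of $(\alpha_{\mathrm{lo}}^T)^{-1}$ tends to $0$; as $(\alpha_{\mathrm{lo}}^T)^{-1}\in\mathcal{K}_{\infty}$ vanishes at the origin, the entire bound tends to $0$, establishing the claim. I expect the only genuine subtlety to be the correct invocation of Lemma~\ref{lem:standard_MPC_upper_bounds_value_function}: what matters is that it delivers a single threshold $\bar N$ and a single comparison quantity $W_{\bar N}$ valid simultaneously for every $x\in\mathcal{X}_M$ and every larger horizon, so that the only surviving $N$-dependence sits in the denominator $\lambda(N)$. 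The superlinear scaling is exactly what converts this uniform, bounded numerator into a vanishing estimate, mirroring the role that the constant-$\lambda$ counterexample shows to be indispensable.
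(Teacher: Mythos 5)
Your proof is correct and uses essentially the same argument as the paper: the chain $\lambda(N)\,T(r_N^*(x)) \le V_N(x) \le W_{\bar N}(x) \le \alpha_W(\delta)$ obtained from nonnegativity of $J_N^{\mathrm{tc}}$, Lemma~\ref{lem:standard_MPC_upper_bounds_value_function}, Proposition~\ref{prop:standard_MPC_stability}, and boundedness of $X$, combined with the lower bound of Assumption~\ref{asm:offset_cost_indication} and the divergence of $\lambda(N)$ from Assumption~\ref{asm:superlinear_scaling}. The only difference is cosmetic: the paper wraps this chain in a proof by contradiction, whereas you invert the inequalities directly to get an explicit, $x$-independent bound $(\alpha_{\mathrm{lo}}^T)^{-1}\bigl(\alpha_W(\delta)/\lambda(N)\bigr)$ that vanishes as $N\to\infty$ — an inessential (and arguably cleaner) repackaging of the same idea.
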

\begin{proof}
    Suppose there exists $\kappa > 0$ such that for all $N \in \mathbb{N}_0$ there exists $x \in \mathcal{X}_{M}$ with $\vert r_N^*(x) \vert_{r_{\mathrm{d}}} \ge \kappa$.
    Assumption~\ref{asm:offset_cost_indication} implies $T(r_{N}^*(x)) \ge \alpha_{\mathrm{lo}}^T(\kappa)$.
    From Lemma~\ref{lem:standard_MPC_upper_bounds_value_function}, there exists $\bar{N} \in \mathbb{N}$ such that $V_N(x) \le W_{\bar{N}}(x)$ for all $x \in \mathcal{X}_{M}$ and $N \ge \bar{N}$.
    Hence, since $J_N^{\mathrm{tc}}$ is non-negative,
    $
        \lambda(N) T(r_N^*(x)) \le V_N(x) \le W_{\bar{N}}(x) 
    $.
    Also, $\mathcal{X}_{M} \subseteq X$, and since $X$ is bounded, there exists $\delta_{M} = \sup_{x \in \mathcal{X}_{M}} \vert x \vert_{x_{\mathrm{d}}}$.
    With $\alpha_W$ from Proposition~\ref{prop:standard_MPC_stability} and $T(r_N^*(x)) \ge \alpha_{\mathrm{lo}}^T(\kappa)$ for all $N\in\mathbb{N}_0$, for any $N \ge \bar{N}$, we have 
    $
        \lambda(N) \le \frac{\alpha_W(\delta_{M})}{T(r_N^*(x))} \le \frac{\alpha_W(\delta_{M})}{\alpha_{\mathrm{lo}}^T(\kappa)}.
    $ %
    But Assumption~\ref{asm:superlinear_scaling} implies $\lambda(N) \to \infty$ as $N\to\infty$.
\end{proof}

We have shown uniform convergence of the artificial reference on each fixed set of feasible states, but the closed-loop solution might behave differently for different horizons and leave this set.
Since the offset cost is scaled superlinearly (Assumption~\ref{asm:superlinear_scaling}), we can interpret it as part of the stage cost independently of the horizon.
The lower bounds from Assumptions~\ref{asm:offset_cost_indication} and~\ref{asm:stage_cost_lower_and_upper_bound} imply strict dissipativity with zero storage.
This implies a turnpike property (shown in the next lemma), which yields another feasibility set that is invariant for the closed loop for any horizon (see Lemma~\ref{lem:switching_limits} below).
\begin{lemma}\label{lem:turnpike}
    Suppose Assumptions~\ref{asm:offset_cost_indication}--\ref{asm:superlinear_scaling} hold.
    Then, for all $\Gamma > 0$, there exists $\sigma_{\Gamma} \in \mathcal{L}$ such that for all $N, P \in \mathbb{N}$, $x \in X$, $u \in \mathbb{U}_X^N(x)$ and $r\in\mathcal{R}$ with
    $J^{\mathrm{tc}}_N(x,u,r) + \lambda(N)T(r) \le \Gamma$, the set 
    $\mathcal{Q} = \{ k \in \mathbb{I}_{0:N-1} \mid \vert x_u(k,x) \vert_{x_{\mathrm{d}}} \ge \sigma_{\Gamma} (P) \}$
    has at most $P$ elements.
\end{lemma}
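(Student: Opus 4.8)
The plan is to convert the single cost budget $\Gamma$ into a bound on how often the predicted trajectory is allowed to be far from $x_{\mathrm{d}}$, exploiting the strict dissipativity (with zero storage) alluded to before the lemma. The crucial first step is to use Assumption~\ref{asm:superlinear_scaling} to spread the one-shot offset penalty over the horizon: since $\lambda(N)\ge N$ and $V^{\mathrm{f}}\ge 0$, the hypothesis $J_N^{\mathrm{tc}}(x,u,r)+\lambda(N)T(r)\le\Gamma$ implies
\begin{equation*}
    \sum_{k=0}^{N-1}\big(\ell(x_u(k,x),u(k),r)+T(r)\big)\le\Gamma.
\end{equation*}
This is exactly where superlinear scaling is indispensable: without $\lambda(N)\ge N$ the offset cost appears only once and cannot be charged to each stage.

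Next I would derive a single $\mathcal{K}_{\infty}$ lower bound on the per-stage term in $\vert x_u(k,x)\vert_{x_{\mathrm{d}}}$. By the triangle inequality $\vert x_u(k,x)\vert_{x_{\mathrm{d}}}\le\vert x_u(k,x)\vert_{x_r}+\vert x_r\vert_{x_{\mathrm{d}}}$ and $\vert x_r\vert_{x_{\mathrm{d}}}\le\vert r\vert_{r_{\mathrm{d}}}$, so whenever $\vert x_u(k,x)\vert_{x_{\mathrm{d}}}\ge s$ at least one of $\vert x_u(k,x)\vert_{x_r}\ge s/2$ or $\vert r\vert_{r_{\mathrm{d}}}\ge s/2$ must hold. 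In the first case Assumption~\ref{asm:stage_cost_lower_and_upper_bound} yields $\ell\ge\bar{\ell}\ge c_1^{\ell}\vert x_u(k,x)\vert_{x_r}^2\ge c_1^{\ell}s^2/4$ (admissibility of $u$ ensures $(x_u(k,x),u(k))\in\mathcal{Z}$); in the second Assumption~\ref{asm:offset_cost_indication} yields $T(r)\ge\alpha_{\mathrm{lo}}^T(\vert r\vert_{r_{\mathrm{d}}})\ge\alpha_{\mathrm{lo}}^T(s/2)$. Hence, with
\begin{equation*}
    \alpha(s)=\min\!\Big(\tfrac{c_1^{\ell}}{4}s^2,\ \alpha_{\mathrm{lo}}^T\!\big(\tfrac{s}{2}\big)\Big)\in\mathcal{K}_{\infty},
\end{equation*}
every stage satisfies $\ell(x_u(k,x),u(k),r)+T(r)\ge\alpha(\vert x_u(k,x)\vert_{x_{\mathrm{d}}})$. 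Checking $\alpha\in\mathcal{K}_{\infty}$ is routine, since a minimum of two $\mathcal{K}_{\infty}$ functions is again $\mathcal{K}_{\infty}$.

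Finally I would turn this into the counting estimate. Define $\sigma_{\Gamma}(P)=\alpha^{-1}(\Gamma/P)$, which is continuous, strictly decreasing in $P$ and tends to $0$ as $P\to\infty$; since we evaluate only at $P\in\mathbb{N}$, it furnishes a class-$\mathcal{L}$ bound (after a harmless continuous extension to $\mathbb{R}_{\ge 0}$). For every $k\in\mathcal{Q}$ we have $\vert x_u(k,x)\vert_{x_{\mathrm{d}}}\ge\sigma_{\Gamma}(P)$, so monotonicity of $\alpha$ gives $\ell+T\ge\alpha(\sigma_{\Gamma}(P))=\Gamma/P$. Summing over $\mathcal{Q}$ and using non-negativity of all stage terms,
\begin{equation*}
    \Gamma\ge\sum_{k=0}^{N-1}\big(\ell(x_u(k,x),u(k),r)+T(r)\big)\ge\sum_{k\in\mathcal{Q}}\big(\ell+T\big)\ge\vert\mathcal{Q}\vert\,\frac{\Gamma}{P},
\end{equation*}
which rearranges to $\vert\mathcal{Q}\vert\le P$, as claimed.

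The argument is essentially self-contained once the first reduction is made, so the main obstacle is conceptual rather than computational: recognising that $\lambda(N)\ge N$ is precisely what lets the offset penalty act as an additional stage cost, rendering the combined cost strictly dissipative with respect to $x_{\mathrm{d}}$ with zero storage, \emph{uniformly} in $N$. The only mild care needed is the $s/2$ splitting used to build $\alpha\in\mathcal{K}_{\infty}$ and the verification that $P\mapsto\alpha^{-1}(\Gamma/P)$ lies in $\mathcal{L}$.
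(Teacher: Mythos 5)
Your proof is correct and takes essentially the same route as the paper's: both use $\lambda(N)\ge N$ and $V^{\mathrm{f}}\ge 0$ to turn the scaled offset cost into a per-stage term, lower-bound $\ell+T$ by a $\mathcal{K}_\infty$ function of $\vert x_u(k,x)\vert_{x_{\mathrm{d}}}$ (your explicit $\alpha$ built via the $s/2$ splitting is exactly the $\rho$ whose existence the paper merely asserts), set $\sigma_\Gamma(P)=\alpha^{-1}(\Gamma/P)$, and count the expensive time instants against the budget $\Gamma$. The only difference is that you argue directly while the paper argues by contradiction ($\vert\mathcal{Q}\vert\ge P+1$ forces $\Gamma>\Gamma$), which is purely cosmetic.
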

\begin{proof}
    Assumptions~\ref{asm:offset_cost_indication} and \ref{asm:stage_cost_lower_and_upper_bound} imply that $\ell(x,u,r) + T(r) \ge c_1^{\ell} \vert x \vert_{x_r}^2 + \alpha_{\mathrm{lo}}^T(\vert x_r \vert_{x_{\mathrm{d}}})$. Thus, there exists $\rho \in \mathcal{K}_{\infty}$ such that $\ell(x,u,r) + T(r) \ge \rho(\vert x \vert_{x_{\mathrm{d}}})$ for any $x,u,r$.
    From this point on, the proof follows along the lines of~\cite[Prop.~8.15]{Gruene2017}.
    We fix $\Gamma > 0$ and choose $\sigma_{\Gamma}(P) = \rho^{-1}(\frac{\Gamma}{P})$.
    Suppose there exist $N,P,x,u,r$ such that $J^{\mathrm{tc}}_N(x,u,r) + \lambda(N)T(r) \le \Gamma$ but $\mathcal{Q}$ has at least $P+1$ elements.
    However, since $\lambda(N) \ge N$ and $V^\mathrm{f} \ge 0$, we get a contradiction:
    $
        \Gamma \ge J^{\mathrm{tc}}_N(x,u,r) + \lambda(N)T(r)
        \ge \sum_{k=0}^{N-1} \ell(x_u(k,x), u(k), r) + NT(r) \ge \sum_{k=0}^{N-1} \rho(\vert x_u(k,x) \vert_{x_{\mathrm{d}}})
        \ge \sum_{k\in\mathcal{Q}} \rho(\sigma_{\Gamma}(P)) \ge (P+1)\frac{\Gamma}{P} > \Gamma.
    $
\end{proof}

Finally, we are able to show that the error terms in the established performance bound~\eqref{eq:transient_performance_bound} vanish if $N,K\to\infty$.
\begin{lemma}\label{lem:switching_limits}
    Let Assumptions~\ref{asm:offset_cost_indication}--\ref{asm:better_candidate_reference} hold and $\widetilde{N} \in \mathbb{N}$.
    Then, for all $x \in \mathcal{X}_{\widetilde{N}}$, $i \in \{ 1, 2 \}$, with $r_N^*(k) = r_N^*(x_{\mu_N}(k, x))$,
    \begin{equation}
        \lim_{N\to\infty} {\sum_{k=0}^{\infty}} \vert r_N^*(k) \vert_{{r}_{\mathrm{d}}}^i {=} {\sum_{k=0}^{\infty}} \left(\lim_{N\to\infty} \vert r_N^*(k) \vert_{{r}_{\mathrm{d}}}^i \right) = 0.
    \end{equation}
\end{lemma}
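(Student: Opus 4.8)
The plan is to reduce the statement to three facts: that the closed-loop states $x_{\mu_N}(k,x)$ stay, for every $k$ and all sufficiently large $N$, inside a single feasibility set $\mathcal{X}_M$ whose index $M$ does not depend on $N$; that on such a fixed set the artificial reference tends to $r_{\mathrm{d}}$ by Lemma~\ref{lem:convergence_artificial_reference}; and that the summands possess an $N$-independent summable majorant so that limit and sum may be interchanged. Granting these, the second equality is immediate because each inner limit is zero, while the first equality follows by dominated convergence on $\mathbb{N}_0$ with the counting measure. Both cases $i\in\{1,2\}$ are treated at once.

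First I would pin down the fixed feasibility set via the turnpike property. Fix $x\in\mathcal{X}_{\widetilde{N}}$. By Lemma~\ref{lem:standard_MPC_upper_bounds_value_function} there is $\bar{N}$ with $V_N(x)\le W_{\bar{N}}(x)$ for all $N\ge\bar{N}$, and since $\mathcal{X}_{\widetilde{N}}\subseteq X$ is bounded, Proposition~\ref{prop:standard_MPC_stability} yields a uniform bound $V_N(x)\le\alpha_W(\delta_{\widetilde{N}})=:\Gamma$ with $\delta_{\widetilde{N}}=\sup_{y\in\mathcal{X}_{\widetilde{N}}}\vert y\vert_{x_{\mathrm{d}}}$. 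As $V_N$ is nonincreasing along the closed loop (see \eqref{eq:value_function_decrease}), the same bound $\Gamma$ holds for every $V_N(x_{\mu_N}(k,x))$. Applying Lemma~\ref{lem:turnpike} to the open-loop optimiser defining $V_N(x_{\mu_N}(k,x))$ and choosing $P$ so large that $\sigma_\Gamma(P)\le c_{\mathrm{b}}$, at least one of the first $P+1$ open-loop states lies in $\mathcal{B}_{c_{\mathrm{b}}}(x_{\mathrm{d}})\subseteq\mathcal{X}^{\mathrm{f}}(r_{\mathrm{d}})$ by \eqref{eq:non-trivial_terminal_set}. Following the optimal inputs up to that index and then the terminal controller $k_{\mathrm{f}}(\cdot,r_{\mathrm{d}})$ gives a feasible candidate for \eqref{eq:MPC_for_tracking} with reference $r_{\mathrm{d}}$ and horizon $M:=P$; hence $x_{\mu_N}(k,x)\in\mathcal{X}_M$ for all $k\in\mathbb{N}_0$ and $N\ge\bar{N}$, with $M$ independent of $N$ and $k$.

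Next I would make the decay uniform in $N$ so as to dominate the series. On the bounded set $\mathcal{X}_M$ the lower bound \eqref{eq:value_function_lower_bound} holds with $a_{\mathrm{lo}}$ independent of $N$, while combining the local quadratic bound $V_N(y)\le c_{\mathrm{f}}c_2^\ell\vert y\vert_{x_{\mathrm{d}}}^2$ with the bound $V_N(y)\le W_{\bar{N}'}(y)\le\alpha_W(\delta_M)$ obtained by re-applying Lemma~\ref{lem:standard_MPC_upper_bounds_value_function} and Proposition~\ref{prop:standard_MPC_stability} on $\mathcal{X}_M$ yields an upper bound $V_N(y)\le a_{\mathrm{up}}\vert y\vert_{x_{\mathrm{d}}}^2$ valid for all large $N$ with $a_{\mathrm{up}}$ independent of $N$. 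Together with \eqref{eq:value_function_decrease}, a standard Lyapunov argument then gives $\vert x_{\mu_N}(k,x)\vert_{x_{\mathrm{d}}}^2\le\tfrac{a_{\mathrm{up}}}{a_{\mathrm{lo}}}\bigl(1-\tfrac{a_{\mathrm{lo}}}{a_{\mathrm{up}}}\bigr)^k\vert x\vert_{x_{\mathrm{d}}}^2$ with a rate independent of $N$. Repeating the chain from Lemma~\ref{lem:convergence_of_series}, $c_{\mathrm{d}}^\ell\vert r_N^*(k)\vert_{r_{\mathrm{d}}}^2\le V_N(x_{\mu_N}(k,x))\le a_{\mathrm{up}}\vert x_{\mu_N}(k,x)\vert_{x_{\mathrm{d}}}^2$, produces an $N$-independent geometric majorant for $\vert r_N^*(k)\vert_{r_{\mathrm{d}}}^2$, whose square root majorises $\vert r_N^*(k)\vert_{r_{\mathrm{d}}}$; both majorants are summable.

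Finally I would conclude. For each fixed $k$ we have $x_{\mu_N}(k,x)\in\mathcal{X}_M$ with $M$ fixed, so Lemma~\ref{lem:convergence_artificial_reference} gives $\lim_{N\to\infty}\vert r_N^*(k)\vert_{r_{\mathrm{d}}}=0$, hence $\lim_{N\to\infty}\vert r_N^*(k)\vert_{r_{\mathrm{d}}}^i=0$ for $i\in\{1,2\}$, and summing these zeros yields the right-hand side. The dominated-convergence argument, using the $N$-independent summable majorant from the previous step, justifies exchanging the limit with the infinite sum, giving the left-hand equality. The main obstacle is obtaining uniformity in $N$: a priori the closed loop may escape $\mathcal{X}_{\widetilde{N}}$ into ever-larger feasibility sets as $N$ grows, and the exponential-stability constants of Theorem~\ref{thm:exponential_stability} may themselves depend on $N$; the turnpike property together with the uniform value-function bound from Lemma~\ref{lem:standard_MPC_upper_bounds_value_function} is precisely what confines the trajectory to a fixed $\mathcal{X}_M$ and renders the decay rate $N$-independent.
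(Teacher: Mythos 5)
Your proof is correct, and its first half is exactly the paper's argument: you confine the closed loop to a fixed feasibility set $\mathcal{X}_P$ by combining Lemma~\ref{lem:standard_MPC_upper_bounds_value_function} (giving the $N$-uniform bound $V_N \le \alpha_W(\delta_{\widetilde{N}})$, preserved along the closed loop by \eqref{eq:value_function_decrease_recast}) with the turnpike Lemma~\ref{lem:turnpike} and the choice $\sigma_\Gamma(P)\le c_{\mathrm{b}}$; you even spell out the detail the paper leaves implicit, namely truncating the open-loop optimiser at the first state in $\mathcal{B}_{c_{\mathrm{b}}}(x_{\mathrm{d}})\subseteq\mathcal{X}^{\mathrm{f}}(r_{\mathrm{d}})$ and extending with $k_{\mathrm{f}}(\cdot,r_{\mathrm{d}})$ to certify membership in $\mathcal{X}_P$. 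Where you genuinely diverge is the justification of the limit--sum interchange. The paper concludes tersely by combining the uniform convergence of Lemma~\ref{lem:convergence_artificial_reference} on $\mathcal{X}_P$ with the summability statement of Lemma~\ref{lem:convergence_of_series}; however, the bound $\rho_N$ there, and the constants $a_{\mathrm{up}}$, $c_{\mathrm{exp}}$, $\gamma_{\mathrm{exp}}$ behind it, are quantified per $N$, and uniform smallness of individual terms together with per-$N$ summability does not by itself license the interchange. You close this gap explicitly: by re-applying Lemma~\ref{lem:standard_MPC_upper_bounds_value_function} and Proposition~\ref{prop:standard_MPC_stability} on the fixed set $\mathcal{X}_M$, you obtain an $N$-independent quadratic upper bound on $V_N$, hence an $N$-independent exponential decay rate and an $N$-independent geometric majorant for $\vert r_N^*(k)\vert_{r_{\mathrm{d}}}^i$, after which dominated convergence finishes the proof. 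So the two proofs share the same decomposition and the same key lemmas; what your version buys is the explicit $N$-uniformity of the tail bound, which is precisely the point a careful reader might query in the published argument, at the cost of one extra (but routine) Lyapunov estimate.
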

\begin{proof}
    First, we show that there exists $P \ge \widetilde{N}$ and $\bar{N}$ such that for all $x \in \mathcal{X}_{\widetilde{N}}$, $N \ge \bar{N}$ and $k\in\mathbb{N}_0$, we have $x_{\mu_N}(k, x) \in \mathcal{X}_{P}$.
    Let $x\in\mathcal{X}_{\widetilde{N}}$.
    From Lemma~\ref{lem:standard_MPC_upper_bounds_value_function}, there exists $\bar{N} \in \mathbb{N}$ such that $V_N(x) \le W_{\bar{N}}(x) \le \alpha_W(\delta_{\widetilde{N}})$ for all $N \ge \bar{N}$, where $\delta_{\widetilde{N}} = \sup_{x \in \mathcal{X}_{\widetilde{N}}} \vert x \vert_{x_{\mathrm{d}}}$.
    We now use Lemma~\ref{lem:turnpike} with $\Gamma = \alpha_W(\delta_{\widetilde{N}})$ and choose $P$ such that $\sigma_{\Gamma}(P) \le c_{\mathrm{b}}$ with $c_{\mathrm{b}}$ from Assumption~\ref{asm:terminal_ingredients}.
    Thus, the set $\{ k \in \mathbb{I}_{0:N-1} \mid \vert x_{u_N^*}(k,x) \vert_{x_{\mathrm{d}}} \ge c_{\mathrm{b}} \}$ has at most $P$ elements for all $x\in\mathcal{X}_{\widetilde{N}}$ and $N \ge \bar{N}$, and, in particular, $x_{u_N^*}(1,x) = x_{\mu_N}(1,x) \in \mathcal{X}_P$.
    In addition, from~\eqref{eq:value_function_decrease_recast}, for all $N \ge \bar{N}$, $V_N(x_{\mu_N}(k,x)) \le V_N(x) \le \alpha_W(\delta_{\widetilde{N}})$ for all $k \in \mathbb{N}_{0}$.
    Hence, Lemma~\ref{lem:turnpike} applies for all $x_{\mu_N}(k,x)$, $k \in \mathbb{I}_{0:N}$, with the same $\Gamma$ and $P$ as before.
    Thus, for all $N \ge \bar{N}$ and $k \in \mathbb{N}_0$, we have $x_{\mu_N}(k,x) \in \mathcal{X}_{P}$.
    Finally, since from Lemma~\ref{lem:convergence_artificial_reference}, $\lim_{N\to\infty} \vert r_N^*(x) \vert_{r_{\mathrm{d}}} = 0$ uniformly on $\mathcal{X}_P$, and since $x_{\mu_N}(k,x) \in \mathcal{X}_P$ for $k \in \mathbb{I}_{0:K}$, together with Lemma~\ref{lem:convergence_of_series}, the claim follows.
\end{proof}
By combining these results, we are able to state our second main result showing that MPC for tracking with a scaled offset cost recovers the optimal infinite horizon cost.
\begin{theorem}\label{thm:asymptotic_performance_bound}
    Suppose Assumptions~\ref{asm:offset_cost_indication}--\ref{asm:better_candidate_reference} hold.
    Then, for any $x \in \mathcal{X}_{\widetilde{N}}$ with $\widetilde{N}\in\mathbb{N}$,
    \begin{equation*}
        \lim_{\substack{N\to\infty \\ K\to\infty}} J_K^{\mathrm{d}}(x, \mu_N) \le \inf_{u\in\mathbb{U}^{\infty}_{\{x_{{\mathrm{d}}}\}}(x)} J_{\infty}^{\mathrm{d}}(x,u).
    \end{equation*}
\end{theorem}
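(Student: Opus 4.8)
The plan is to take the transient performance bound from Theorem~\ref{thm:transient_performance_bound} and push both limits $K\to\infty$ and $N\to\infty$ through it, showing that every error term either vanishes or goes in the favorable direction. Recall that Theorem~\ref{thm:transient_performance_bound} gives, for $N \ge \bar N$,
\begin{align*}
    J_K^{\mathrm{d}}(x, \mu_N) &\le \inf_{u\in\mathbb{U}^K_{\mathcal{B}_{\kappa}(x_{{\mathrm{d}}})}(x)} J_K^{\mathrm{d}}(x,u) + \delta_1(N) + \delta_2(K) - V_N({x}_{\mu_{N}}(K))
        \\
    &\phantom{\le{}} + \sum_{k=0}^{K-1} c_5^{\ell} \vert r_N^*(x_{\mu_{N}}(k)) \vert_{r_{{\mathrm{d}}}}^2 + c_6^{\ell} \vert r_N^*(x_{\mu_{N}}(k)) \vert_{r_{{\mathrm{d}}}}
\end{align*}
with $\delta_1,\delta_2 \in \mathcal{L}$. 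The clean terms are easy: $\delta_2(K)\to 0$ as $K\to\infty$ and $\delta_1(N)\to 0$ as $N\to\infty$ since both are $\mathcal{L}$-functions, and $-V_N(x_{\mu_N}(K)) \le 0$ since $V_N \ge 0$, so that term can simply be dropped from the upper bound.

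First I would handle the infimum term. As $K\to\infty$, the ball radius $\kappa = c_{\mathrm{std}}\vert x\vert_{x_{\mathrm{d}}}\gamma_{\mathrm{std}}^K$ shrinks to zero, so the constraint set $\mathbb{U}^K_{\mathcal{B}_{\kappa}(x_{\mathrm{d}})}(x)$ tightens towards $\mathbb{U}^\infty_{\{x_{\mathrm{d}}\}}(x)$; combined with $J_K^{\mathrm{d}} \to J_\infty^{\mathrm{d}}$ this gives
$
    \lim_{K\to\infty}\inf_{u\in\mathbb{U}^K_{\mathcal{B}_{\kappa}(x_{\mathrm{d}})}(x)} J_K^{\mathrm{d}}(x,u) \le \inf_{u\in\mathbb{U}^\infty_{\{x_{\mathrm{d}}\}}(x)} J_\infty^{\mathrm{d}}(x,u),
$
which is exactly the target right-hand side. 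Since this $\delta_1,\delta_2$-decomposition and the infimum bound come directly from the proof of Proposition~\ref{prop:standard_MPC_value_function_performance_bound} via~\cite[Thm.~8.22]{Gruene2017}, I would lean on that machinery rather than re-deriving it.

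The genuinely delicate part is the error sum $\sum_{k=0}^{K-1} c_5^{\ell}\vert r_N^*(k)\vert_{r_{\mathrm{d}}}^2 + c_6^{\ell}\vert r_N^*(k)\vert_{r_{\mathrm{d}}}$, because the two limits interact: for each fixed $N$ the sum is finite and bounded by $\rho_N$ (Lemma~\ref{lem:convergence_of_series}), but I need the sum to vanish as $N\to\infty$ and must justify interchanging the limit in $N$ with the infinite summation in $k$. This is precisely what Lemma~\ref{lem:switching_limits} supplies: it legitimises the exchange and shows that for both $i\in\{1,2\}$,
$
    \lim_{N\to\infty}\sum_{k=0}^\infty \vert r_N^*(k)\vert_{r_{\mathrm{d}}}^i = \sum_{k=0}^\infty \big(\lim_{N\to\infty}\vert r_N^*(k)\vert_{r_{\mathrm{d}}}^i\big) = 0.
$
The mechanism is that the turnpike argument (Lemma~\ref{lem:turnpike}) traps the closed-loop trajectory in a fixed sublevel set $\mathcal{X}_P$ independent of $N$, on which the uniform open-loop convergence $\vert r_N^*(x)\vert_{r_{\mathrm{d}}}\to 0$ (Lemma~\ref{lem:convergence_artificial_reference}) applies.

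Finally I would assemble the pieces. Taking $K\to\infty$ first (using Lemma~\ref{lem:convergence_of_series} to keep the error sum finite, $\delta_2(K)\to 0$, and the infimum convergence above) and then $N\to\infty$ (using Lemma~\ref{lem:switching_limits} for the error sum and $\delta_1(N)\to 0$), every term on the right except the target infimum either vanishes or is nonpositive. The main obstacle, as flagged, is the order-of-limits subtlety in the error sum, but since Lemma~\ref{lem:switching_limits} is already established and guarantees the two iterated limits agree and equal zero, the remaining work is just collecting the limits term by term, yielding
$
    \lim_{N,K\to\infty} J_K^{\mathrm{d}}(x,\mu_N) \le \inf_{u\in\mathbb{U}^\infty_{\{x_{\mathrm{d}}\}}(x)} J_\infty^{\mathrm{d}}(x,u).
$
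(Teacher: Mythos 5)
Your proposal is correct and takes essentially the same approach as the paper: the paper's own proof is a single sentence stating that the claim follows by letting $K\to\infty$ and $N\to\infty$ in the transient bound~\eqref{eq:transient_performance_bound} and applying Lemma~\ref{lem:switching_limits}. Your term-by-term elaboration (dropping the nonpositive term $-V_N(x_{\mu_N}(K))$, using $\delta_1,\delta_2\in\mathcal{L}$, deferring the limit of the infimum term to the machinery of~\cite{Gruene2017} inherited through Proposition~\ref{prop:standard_MPC_value_function_performance_bound}, and invoking Lemma~\ref{lem:switching_limits} for the interchange of limit and summation in the error terms) fills in exactly the details the paper leaves implicit.
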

\begin{proof}
    The claim follows from letting $K\to\infty$ and $N\to\infty$ in~\eqref{eq:transient_performance_bound} and applying Lemma~\ref{lem:switching_limits}. 
\end{proof}

\section{Example: Continuous stirred-tank reactor}
We consider the example of a continuous stirred-tank reactor from~\cite{Mayne2011, Koehler2020b}.
The nonlinear system is discretised with an Euler discretisation using a sampling time of $0.1\,\mathrm{s}$.
The goal is to steer the plant from $x_0 = [0.9492, 0.43]$ to the desired reference $x_{\mathrm{e}} = x_{\mathrm{d}} = [0.2632, 0.6519]$.
The constraints are $\mathcal{Z} = ([0, 1] \times [0, 1]) \times [0, 2]$, and we allow equilibria with $\mathcal{Z}_r = ([0.0529, 0.9492] \times [0.43, 0.86]) \times [0.1366, 0.7687]$.
We compute terminal ingredients as proposed in~\cite{Koehler2020a} by gridding $\mathcal{Z}_r$ with 200 points.
We choose a quadratic stage cost with $Q = I_2$ and $R = 0.01$, and $T(r) = 0.01\vert x_{r,1} \vert_{x_{\mathrm{d},1}}^2 + 1000\vert x_{r,2} \vert_{x_{\mathrm{d},2}}^2 + \vert u_r \vert_{u_{\mathrm{d}}}^{2}$.
The simulation is implemented in Python using~\cite{Andersson2019,Diamond2016,Waechter2005}.
In Figure~\ref{fig:state_space_evolution}, we plot the closed-loop solution for different horizons.
It can be observed that a larger horizon allows a more optimal trajectory further from the steady state manifold, since the MPC for tracking has more steps to reach its neighbourhood, and the infinite horizon solution is approximated.
For comparison, we also simulate the closed loop when using standard MPC \eqref{eq:standard_MPC} with $N=515$ and MPC for tracking with $\lambda(N) \equiv 1$.
Note that a standard MPC with $N \le 500$ is infeasible since it cannot satisfy $x_{u_{\mathrm{std}}^*}(N,x) \in \mathcal{X}^{\mathrm{f}}(r_{\mathrm{d}})$.
In this example, MPC for tracking can use much shorter prediction horizons, i.e. $N=1$, which determine the computational burden of MPC.
In addition, it already shows good performance for $N=100$.
This indicates that there is no major disadvantage in terms of performance when using MPC for tracking instead of a standard MPC formulation.
\begin{figure}[bp]
    \setlength\axisheight{0.75\linewidth}
    \setlength\axiswidth{0.95\linewidth}
    \centering
    \input{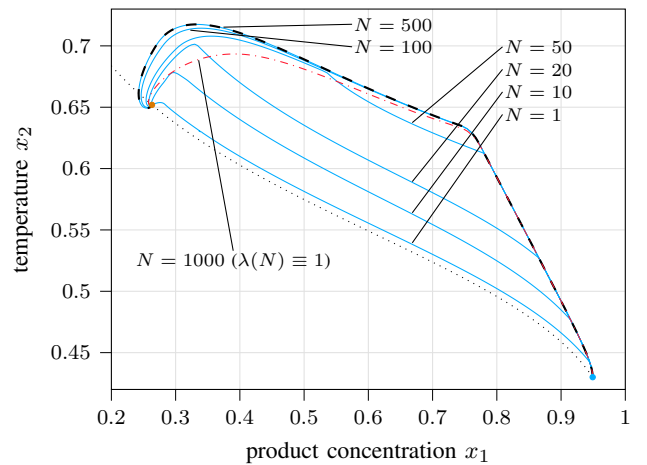}
    \caption{Closed-loop evolution of MPC for tracking with different horizons. The infinite horizon solution is dashed; the solution using standard MPC ($N=515$) is indistinguishable from it. The steady state manifold is dotted.}
    \label{fig:state_space_evolution}
\end{figure}%

\bibliographystyle{IEEEtran}
\bibliography{references}

\begin{thebibliography}{10}
\providecommand{\url}[1]{#1}
\csname url@samestyle\endcsname
\providecommand{\newblock}{\relax}
\providecommand{\bibinfo}[2]{#2}
\providecommand{\BIBentrySTDinterwordspacing}{\spaceskip=0pt\relax}
\providecommand{\BIBentryALTinterwordstretchfactor}{4}
\providecommand{\BIBentryALTinterwordspacing}{\spaceskip=\fontdimen2\font plus
\BIBentryALTinterwordstretchfactor\fontdimen3\font minus
  \fontdimen4\font\relax}
\providecommand{\BIBforeignlanguage}[2]{{%
\expandafter\ifx\csname l@#1\endcsname\relax
\typeout{** WARNING: IEEEtran.bst: No hyphenation pattern has been}%
\typeout{** loaded for the language `#1'. Using the pattern for}%
\typeout{** the default language instead.}%
\else
\language=\csname l@#1\endcsname
\fi
#2}}
\providecommand{\BIBdecl}{\relax}
\BIBdecl

\bibitem{Rawlings2020}
J.~Rawlings, D.~Mayne, and M.~Diehl, \emph{{Model Predictive Control: Theory,
  Computation and Design}}, 2nd~ed.\hskip 1em plus 0.5em minus 0.4em\relax
  Santa Barbara: {Nob Hill Publishing LLC}, 2020.

\bibitem{Gruene2017}
L.~Gr{\"u}ne and J.~Pannek, \emph{{Nonlinear Model Predictive Control: Theory
  and Algorithms}}, 2nd~ed., ser. {Commun. Control. Eng.}\hskip 1em plus 0.5em
  minus 0.4em\relax Cham: Springer, 2017.

\bibitem{Limon2008}
D.~Lim{\'o}n, I.~Alvarado, T.~Alamo, and E.~F. Camacho, ``{MPC for tracking
  piecewise constant references for constrained linear systems},''
  \emph{{Automatica}}, vol.~44, no.~9, pp. 2382--2387, 2008.

\bibitem{Limon2018}
D.~Lim{\'o}n, A.~Ferramosca, I.~Alvarado, and T.~Alamo, ``{Nonlinear MPC for
  Tracking Piece-Wise Constant Reference Signals},'' \emph{{IEEE} Trans. Autom.
  Control}, vol.~63, no.~11, pp. 3735--3750, 2018.

\bibitem{Soloperto2022}
R.~Soloperto, J.~K{\"o}hler, and F.~Allg{\"o}wer, ``A nonlinear {MPC} scheme
  for output tracking without terminal ingredients,'' \emph{{IEEE} Trans.
  Autom. Control}, pp. 1--1, 2022.

\bibitem{Limon2016}
D.~Lim{\'o}n, M.~Pereira, D.~{La Munoz de Pena}, T.~Alamo, C.~N. Jones, and
  M.~N. Zeilinger, ``{MPC for Tracking Periodic References},'' \emph{IEEE
  Trans. Autom. Control}, vol.~61, no.~4, pp. 1123--1128, 2016.

\bibitem{Koehler2020b}
J.~K{\"o}hler, M.~A. M{\"u}ller, and F.~Allg{\"o}wer, ``{A nonlinear tracking
  model predictive control scheme for dynamic target signals},''
  \emph{{Automatica}}, vol. 118, p. 109030, 2020.

\bibitem{Conte2013a}
C.~Conte, M.~N. Zeilinger, M.~Morari, and C.~N. Jones, ``{Cooperative
  distributed tracking MPC for constrained linear systems: Theory and
  synthesis},'' in \emph{Proc. IEEE 52nd Conf. Decis. Control (CDC)}.\hskip 1em
  plus 0.5em minus 0.4em\relax Florence, Italy: IEEE, 2013, pp. 3812--3817.

\bibitem{Aboudonia2022a}
A.~Aboudonia, A.~Eichler, F.~Cordiano, G.~Banjac, and J.~Lygeros, ``Distributed
  model predictive control with reconfigurable terminal ingredients for
  reference tracking,'' \emph{{IEEE} Trans. Autom. Control}, vol.~67, no.~11,
  pp. 6263--6270, 2022.

\bibitem{Koehler2022b}
M.~Köhler, M.~A. Müller, and F.~Allgöwer, ``Distributed {MPC} for
  self-organized cooperation of multi-agent systems,'' 2022,
  10.48550/ARXIV.2210.10128.

\bibitem{Ferramosca2009}
A.~Ferramosca, D.~Lim{\'o}n, I.~Alvarado, T.~Alamo, and E.~F. Camacho, ``{MPC
  for tracking with optimal closed-loop performance},'' \emph{{Automatica}},
  vol.~45, no.~8, pp. 1975--1978, 2009.

\bibitem{Ferramosca2011a}
A.~Ferramosca, D.~Lim{\'o}n, I.~Alvarado, T.~Alamo, F.~Casta{\~{n}}o, and
  E.~Camacho, ``Optimal {MPC} for tracking of constrained linear systems,''
  \emph{Int. J. Syst. Sci.}, vol.~42, no.~8, pp. 1265--1276, 2011.

\bibitem{Kellett2014}
C.~M. Kellett, ``A compendium of comparison function results,'' \emph{Math.
  Control Signals Systems}, vol.~26, no.~3, pp. 339--374, 2014.

\bibitem{Koehler2020a}
J.~K{\"o}hler, M.~A. M{\"u}ller, and F.~Allg{\"o}wer, ``{A Nonlinear Model
  Predictive Control Framework Using Reference Generic Terminal Ingredients},''
  \emph{{IEEE Trans. Autom. Control}}, vol.~65, no.~8, pp. 3576--3583, 2020.

\bibitem{Mayne2011}
D.~Q. Mayne, E.~C. Kerrigan, E.~J. van Wyk, and P.~Falugi, ``Tube-based robust
  nonlinear model predictive control,'' \emph{Int. J. Robust Nonlinear
  Control}, vol.~21, no.~11, pp. 1341--1353, 2011.

\bibitem{Andersson2019}
J.~A.~E. Andersson, J.~Gillis, G.~Horn, J.~B. Rawlings, and M.~Diehl,
  ``{CasADi} -- {A} software framework for nonlinear optimization and optimal
  control,'' \emph{Math. Program. Comput.}, vol.~11, no.~1, pp. 1--36, 2019.

\bibitem{Diamond2016}
S.~Diamond and S.~Boyd, ``{CVXPY}: {A} {P}ython-embedded modeling language for
  convex optimization,'' \emph{J. Mach. Learn. Res.}, vol.~17, no.~83, pp.
  1--5, 2016.

\bibitem{Waechter2005}
A.~W{\"a}chter and L.~T. Biegler, ``On the implementation of an interior-point
  filter line-search algorithm for large-scale nonlinear programming,''
  \emph{Math. Program.}, vol. 106, no.~1, pp. 25--57, 2005.

\end{thebibliography}

\end{document}